\let\uml\"
\title[A constructive proof]{A constructive proof that the Hanoi towers group has non-trivial rigid kernel} 
\author{Rachel Skipper}  
\address{Department of Mathematics \\
Binghamton University \\
P.O. Box 6000 \\
Binghamton, NY 13902-6000, USA} 
\email{skipper@math.binghamton.edu}  
\thanks{The author is grateful to Marcin Mazur for his helpful discussions in preparation of this paper.} 
\keywords{}
\subjclass[2010]{}
\newcommand{\St}{\text{Stab}}
\newcommand{\Ri}{\text{Rist}}
\newtheorem{theorem}{Theorem}
\newtheorem{lemma}[theorem]{Lemma}
\newtheorem{corollary}[theorem]{Corollary}
\newtheorem{remark}[theorem]{Remark}
\definecolor{wwqqzz}{rgb}{0.4,0.,0.6}
\definecolor{ttttff}{rgb}{0.2,0.2,1.}
\definecolor{qqwuqq}{rgb}{0.,0.39215686274509803,0.}
\definecolor{ffcctt}{rgb}{1.,0.8,0.2}
\definecolor{ffwwqq}{rgb}{1.,0.4,0.}
\definecolor{ffqqtt}{rgb}{1.,0.,0.2}
\definecolor{zzttqq}{rgb}{0.6,0.2,0.}
\begin{document} 
 
\begin{abstract} 
In 2012, Bartholdi, Siegenthaler, and Zalesskii computed the rigid kernel for the only known group for which it is non-trivial, the Hanoi towers group. There they determined the kernel was the Klein four-group. In this note, we present a simpler proof of this theorem. In the course of the proof, we also compute the rigid stabilizers and present proofs that this group is a self-similar, self-replicating, regular branch group.
\end{abstract}
\maketitle


\section{Introduction}

Since the construction of the first Grigorchuk group in 1980, the study of branch groups has developed into an important area in group theory. Branch groups derive their value from the unusual properties that groups in this class can exhibit. Amenable but not elementary amenable groups, groups of finite width, groups with intermediate growth, and finitely generated infinite torsion groups are a few of the types that can arise. As a result, these groups have been heavily studied in recent years \cite{BGS03}.

Showing that these groups have interesting properties and understanding why are equally important tasks as the latter can be used to gain a deeper understanding of these groups and eventually used to construct groups with additional noteworthy properties. For this reason, constructive proofs using the underlying geometry and properties of the group as opposed to more abstract techniques are essential.

In this note we give a short proof of the theorem of \cite{BSZ12}, namely that the rigid kernel for the Hanoi Towers Group is the Klein four-group. Along the way we establish various other properties of that group. Since this is the only branch group thus far shown to have non-trivial rigid kernel, acquiring a deeper understanding of the Hanoi towers group is desirable, a task we seek to achieve here.

In section 2, we describe various properties of branch groups as well as provide the necessary terminology and notation for the rest of the paper. In section 3, we present the congruence subgroup problem for branch groups. In section 4, we describe the ``Towers of Hanoi" game and present the Hanoi towers group which models this game for any number of disks. And finally, in section 5, we prove a number of properties of this group along with the main theorem.

\section{Branch groups}

Branch groups are defined in terms of their actions on trees, so for this reason we introduce some initial vocabulary and notation to aid in the discussion of groups of this type.

For any vertex $u$ in a rooted tree, its \textbf{level} will be defined as the length of the geodesic from the root to $u$ and denoted $|u|$. The tree is called \textbf{spherically homogeneous} if every vertex on a given level has the same valency.

An infinite, spherically homogeneous, rooted tree is fully determined by a sequence of integers $\overline{m}=(m_1, m_2, \dots )$ where each vertex of level $n-1$ has $m_n$ adjacent vertices of level $n$. $\mathcal{T}_{\overline{m}}$ will denote the tree with defining sequence $\overline{m}$. The tree is called \textbf{regular} when the defining sequence is constant, $d:=m_1=m_2=\cdots$. Such a tree is referred to as a $d$-ary tree. When the defining sequence is either arbitrary or clear from the context, the subscript will be dropped. With this notation, we will write $\emptyset$ for the root and we will identify a vertex $u$ of level $n$ with a sequence of integers $u=(u_1, u_2, \dots, u_n)$ where $1 \leq u_i \leq m_{i}$ and where the prefixes of the sequence correspond to the vertices on the geodesic between $u$ and $\emptyset$. Then the set of vertices of level n in $\mathcal{T}$ can be ordered linearly using the lexicographical ordering. Thus when convenient, we will number the vertices of level $n$ by the indexing set $\{1, 2, \dots, m_1\cdots m_n\}$.

When we wish to indicate that the tree is a $d$-ary tree, an alternate notation will be used. We will use $X$ for the set $\{1, 2, \dots, d\}$, $X^n$ for for sequences of length $n$ in $X$ (the vertices of level $n$ in the tree), and $X^*$ for the infinite $d$-ary tree.

$V(\mathcal{T}_{\overline{m}})$ will denote the set of vertices of $\mathcal{T}_{\overline{m}}$. The  \textbf{automorphism group} of $\mathcal{T}_{\overline{m}}$, denoted Aut$(\mathcal{T}_{\overline{m}})$, consists of bijections from $V(\mathcal{T}_{\overline{m}})$ to $V(\mathcal{T}_{\overline{m}})$ that preserve the root and preserve edge incidences. Thus, vertices of the same level in $\mathcal{T}_{\overline{m}}$ can only be permuted among themselves. Because of this, an element $g$ in Aut$(\mathcal{T}_{\overline{m}})$ can be regarded as a labeling of the vertices of $\mathcal{T}_{\overline{m}}$ by permutations, $\{g(v)\}_{v \in V(\mathcal{T}_{\overline{m}})}$, where if $|v|=n$ then $g(v)\in S_{m_n}$, the symmetric group on $m_n$ letters. Then for a vertex $u=(u_1, u_2, \dots, u_n)$, the action of $g$ is computed as

 $$u^g:=(u_1^{g(\emptyset)}, u_2^{g(u_1)}, \dots, u_n^{g(u_1, \dots, u_{n-1})}).$$
 
We say a vertex $v \in V(\mathcal{T}_{\overline{m}})$ is a \textbf{descendant} of $u$ if the geodesic from $v$ to $\varnothing$ includes the geodesic from $u$ to $\varnothing$. The set of descendants of $u$ forms the subtree rooted at $u$, denoted $\mathcal{T}_u$. If $\mathcal{T}_{\overline{m}}$ is a spherically homogeneous, rooted tree then for any $n$, each subtree of $\mathcal{T}_{\overline{m}}$ rooted at a vertex of level $n$ is canonically isomorphic to $\mathcal{T}_{\psi^n(\overline{m})}$, where $\psi^n(\overline{m})=(m_{n+1}, m_{n+2}, \dots)$. As a result, there is a natural isomorphism Aut$(\mathcal{T}_{\overline{m}})\cong \text{Aut}(\mathcal{T}_{\psi^n(\overline{m})})\wr ~ M_n= (\prod \text{Aut}(\mathcal{T}_{\psi^n(\overline{m})})) \rtimes M_n$ where $M_n=(\cdots (S_{m_n}\wr S_{m_{n-1}})\wr \cdots) \wr S_{m_1}$ with $S_k$ signifying the symmetric group on $k$ letters. The iterated wreath product $M_n$ is the automorphism group of the finite subtree of $\mathcal{T}_{\overline{m}}$ consisting of vertices of level less than or equal to $n$. When the tree is a regular $d$-ary tree, then the subtree rooted at any vertex is isomorphic to the full tree and we get the natural isomorphism Aut$(X^*)\cong \text{Aut}(X^*)\wr M_n$ with $M_n={(\cdots (S_{d}\wr S_{d})\wr \cdots) \wr S_{d}}$. 

Following the notation of \cite{BSZ12}, for $g\in$Aut$(\mathcal{T}_{\overline{m}})$ and for $u$ a vertex of level $n$, we will denote by $g@u$ the $u$th coordinate of $g$ in the canonical identification Aut$(\mathcal{T}_{\overline{m}})\cong (\text{Aut}(\mathcal{T}_{\psi^n(\overline{m})})\times \cdots \text{Aut}(\mathcal{T}_{\psi^n(\overline{m})}))\wr ~ M_n$, and we will call it the \textbf{state} of $g$ at $u$. Any element $g\in$Aut$(\mathcal{T}_{\overline{m}})$, can be described as $g=(g_1, \dots, g_{m_1})\sigma$ where $\sigma$ the permutation labeling at the root and $g_i$, $1\leq i \leq m_1$, is the state of $g$ at the $i$th subtree rooted at the first level. In the case of the $d$-ary tree, for $g\in \text{Aut}(X^*)$, the states of $g$ are also in Aut($X^*$). 

For any subgroup $G$ of Aut$(\mathcal{T}_{\overline{m}})$, four families of subgroups arise naturally. For a vertex $u \in V(\mathcal{T}_{\overline{m}})$, the \textbf{vertex stabilizer} of $u$, $\St_G(u)$, is the set of elements in $G$ which fix the vertex $u$. In terms of the labeling of the vertices by elements in a symmetric group, this consists of the elements that necessarily have trivial labeling on all vertices on the path between $u$ and $\varnothing$, except possibly at $u$. For a non-negative integer $n$, the \textbf{$n$th level stabilizer}, $\St_G(n)$, is the normal subgroup $\displaystyle\bigcap_{|u|=n}\St_G(u)$. In terms of the labelings, this consists of the elements of $G$ with trivial labeling on all vertices $v$ where $|v|\leq n-1$. Note that for all $n$, $\St_G(n)$ has finite index in $G$. 

When $g\in \St_G(n)$, it can be defined by $g=(g_1, \dots, g_{m_1\cdots m_n})_n$ where each $g_i$, $1 \leq i \leq m_1\cdots m_n$, describes the state of $g$ at the $i$th subtree rooted at the $n$th level. In addition, we can canonically identify $\St_G(n)/\St_G(n+1)$ with a subgroup of $(S_{m_{n+1}})^{m_1\cdots m_n}$. Hence elements in this quotient will be described by a $m_1\cdots m_n$-tuple of permutations.

Next, the \textbf{rigid stabilizer} of a vertex $u$, $\Ri_G(u)$, consists of the elements of $G$ which act trivially outside of the subtree rooted at $u$. In terms of the labeling, this consists of elements that have trivial labeling on all vertices outside of $\mathcal{T}_u$. If $G$ acts transitively on all the levels of $\mathcal{T}_{\overline{m}}$, then for any two vertices $u$ and $v$ such that $|u|=|v|$, $\Ri_G(u)\cong \Ri_G(v)$ (and in fact are conjugate in $G$). And finally, for a non-negative integer $n$, the \textbf{$n$th level rigid stabilizer} is the normal subgroup $\Ri_G(n)={\langle \Ri_G(u)| \text{ } |u|=n \rangle} = {\displaystyle\prod_{|u|=n} \Ri_G(u)}$, the internal direct product of the rigid stabilizers of the vertices of level $n$. For any group $G$ acting faithfully on $\mathcal{T}_{\overline{m}}$, $\Ri_G(n)\leq \St_G(n)$.

A subgroup $G$ of Aut($\mathcal{T})$ is said to be a \textbf{branch group} if $G$ acts transitively on each level of $\mathcal{T}$ and for all $n$, $\Ri_G(n)$ has finite index in $G$. 

\section{The congruence subgroup problem for branch groups} 

The congruence subgroup property for branch groups derives its name from the congruence subgroup problem for $SL(n,\mathbb{Z})$ which asks if every subgroup of finite index in $SL(n, \mathbb{Z})$ contains a principal congruence subgroup, the kernel of the map $SL(n, \mathbb{Z}) \rightarrow SL(n, \mathbb{Z}/m\mathbb{Z})$ for some $m$. This is false for $n=2$ but was answered affirmatively for $n \geq 3$ in \cite{BLS64}. Similarly, we say that a branch group $G$ has the \textbf{congruence subgroup property} if every subgroup of finite index contains the $n$th level stabilizer for some $n$.  We can restate this in terms of profinite completions as follows. 

Since $\St_G(n)$ has finite index in $G$ for all $n$ and since this collection forms a descending collection of normal subgroups, taking ${\{\St_G(n)| n \in \mathbb{N} \}}$ as a basis for the neighborhoods of $\{1\}$ produces a profinite topology on $G$ (see Section 3.1 \cite{RZ10}) called the \textbf{congruence topology}. Likewise $\Ri_G(n)$ has finite index for all $n$, and in the same way produces a profinite topology called the \textbf{branch topology}. Further, $G$ has a third natural topology, the \textbf{full profinite topology} where $\mathcal{N}=\{N\unlhd G \mid |G:N|<\infty\}$ is taken as a basis for the neighborhoods of $\{1\}$. Observe that the congruence topology is weaker than the branch topology which is weaker than the full profinite topology. We can complete $G$ in terms of these topologies and obtain three profinite groups:

\begin{align*}
&\overline{G}=\underset{n \geq 1}{\varprojlim} G/\St_G(n)  &\textbf{the congruence completion}  \\
&\widetilde{G}=\underset{n\geq 1}{\varprojlim} G/\Ri_G(n)  &\textbf{the branch completion}  \\
&\widehat{G}=\underset{N \in \mathcal{N}}{\varprojlim}G/N  &\textbf{the profinite completion}  \\
\end{align*}

As $G$ is a subgroup of Aut$(\mathcal{T})$, we see $\bigcap_{n \geq 1}\St_G(n)=\{1\}$, G is residually finite and embeds into $\overline{G}$, $\widetilde{G}$, and $\widehat{G}$.

Thus $G$ has the congruence subgroup property if and only if the \textbf{congruence kernel}, $\ker(\widehat{G}\rightarrow \overline{G})$, is trivial. The \textbf{congruence subgroup problem} for branch groups asks not only whether a branch group has the congruence subgroup property but also to quantitatively describe the congruence kernel. Since there is a third topology at play, namely the branch topology, we can instead study two pieces of the congruence kernel, namely the \textbf{branch kernel}, $\ker(\widehat{G}\rightarrow \widetilde{G})$, and the \textbf{rigid kernel}, $\ker(\widetilde{G} \rightarrow \overline{G})$. Although a group may have many realizations as a branch group, each of these kernels are invariants of the group and are not dependent on the choice of realization \cite{Gar14}.

Many of the most studied branch groups have been shown to have a trivial congruence kernel, including the Fabrykowsky-Gupta group and the Gupta-Sidki group \cite{BGS03}, the Grigorchuk group and an infinite family of generalizations of the Fabrykowsky-Gupta group \cite{Gri00}, and GGS-groups with non-constant accompanying vectors \cite{Per07}, \cite{FGU17}.

Pervova \cite{Per07} constructed the first branch groups without the congruence subgroup property. Nevertheless, the groups in her infinite family, periodic EGS groups with non-symmetric accompanying vector, have non-trivial branch kernel but trivial rigid kernel. Likewise, the twisted twin of the Grigorchuk group was found to have non-trivial branch kernel but trivial rigid kernel \cite{BS09}.

Despite the existence of infinite families of groups having either trivial branch and trivial rigid kernel or non-trivial branch kernel but trivial rigid kernel, only one group appearing in the literature has been shown to have non-trivial rigid kernel. It is the Hanoi towers group on three pegs \cite{BSZ12}. For this reason, we explore the various properties of the Hanoi towers group.

\begin{remark}
Since the writing of this paper, the author has constructed new examples of branch groups with non-trivial rigid kernel. They appear as finite index subgroups of a generalization of the Hanoi towers group \cite{Ski17}.
\end{remark}

\section{The game and the group}
The Hanoi towers group $\Gamma$ was first introduced by Grigorchuk and \u{S}uni\'{k} \cite{GS06}. The action of $\Gamma$ on the first $n$ levels of the tree models the ``Towers of Hanoi'' game with $n$ disks, hence the name. We start by describing the game. 

The ``Towers of Hanoi'' game for three pegs and $n$ disks works as follows. It begins with 3 pegs and $n$ disks each of varying size organized from largest to smallest on the first peg. Figure 1 shows this initial game state for $n=6$. The goal of the game is to move each of the disks from the first peg to the third peg through a series of moves. Each move consists of taking the top disk from one peg and placing it atop another peg with the restriction that at no point can a disk be on top of a smaller disk.

\begin{figure}[ht]
\begin{center}
\begin{tikzpicture}[line cap=roundline join=round,>=triangle 45,x=1.0cm,y=1.0cm, scale=.8];
\fill[line width=2.pt,color=zzttqq,fill=zzttqq,fill opacity=0.1] (-0.54,-2.52) -- (-0.54,-3.27) -- (2.46,-3.27) -- (2.46,-2.52) -- cycle;
\fill[line width=2.pt,color=zzttqq,fill=zzttqq,fill opacity=0.1] (4.46,-2.52) -- (4.46,-3.27) -- (7.46,-3.27) -- (7.46,-2.52) -- cycle;
\fill[line width=2.pt,color=zzttqq,fill=zzttqq,fill opacity=0.1] (9.46,-2.52) -- (9.46,-3.27) -- (12.46,-3.27) -- (12.46,-2.52) -- cycle;
\fill[color=ffqqtt,fill=ffqqtt,fill opacity=1.0] (-0.34,-2.22) -- (-0.34,-2.52) -- (2.26,-2.52) -- (2.26,-2.22) -- cycle;
\fill[color=ffwwqq,fill=ffwwqq,fill opacity=1.0] (-0.14,-1.92) -- (-0.14,-2.22) -- (2.06,-2.22) -- (2.06,-1.92) -- cycle;
\fill[color=ffcctt,fill=ffcctt,fill opacity=1.0] (0.04,-1.62) -- (0.04,-1.92) -- (1.86,-1.92) -- (1.86,-1.62) -- cycle;
\fill[color=qqwuqq,fill=qqwuqq,fill opacity=1.0] (0.24,-1.3) -- (0.24,-1.62) -- (1.66,-1.62) -- (1.66,-1.3) -- cycle;
\fill[color=ttttff,fill=ttttff,fill opacity=1.0] (0.44,-1.) -- (0.44,-1.3) -- (1.46,-1.3) -- (1.46,-1.) -- cycle;
\fill[color=wwqqzz,fill=wwqqzz,fill opacity=1.0] (0.64,-0.7) -- (0.64,-1.) -- (1.26,-1.) -- (1.26,-0.7) -- cycle;
\draw [line width=2.pt,color=zzttqq, line cap=round] (-0.54,-2.52)-- (-0.54,-3.27);
\draw [line width=2.pt,color=zzttqq, line cap=round] (-0.54,-3.27)-- (2.46,-3.27);
\draw [line width=2.pt,color=zzttqq, line cap=round] (2.46,-3.27)-- (2.46,-2.52);
\draw [line width=2.pt,color=zzttqq, line cap=round] (2.46,-2.52)-- (-0.54,-2.52);
\draw [line width=2.pt,color=zzttqq, line cap=round] (4.46,-2.52)-- (4.46,-3.27);
\draw [line width=2.pt,color=zzttqq, line cap=round] (4.46,-3.27)-- (7.46,-3.27);
\draw [line width=2.pt,color=zzttqq, line cap=round] (7.46,-3.27)-- (7.46,-2.52);
\draw [line width=2.pt,color=zzttqq, line cap=round] (7.46,-2.52)-- (4.46,-2.52);
\draw [line width=2.pt,color=zzttqq, line cap=round] (9.46,-2.52)-- (9.46,-3.27);
\draw [line width=2.pt,color=zzttqq, line cap=round] (9.46,-3.27)-- (12.46,-3.27);
\draw [line width=2.pt,color=zzttqq, line cap=round] (12.46,-3.27)-- (12.46,-2.52);
\draw [line width=2.pt,color=zzttqq, line cap=round] (12.46,-2.52)-- (9.46,-2.52);
\draw [line width=3.2pt] (5.96,0.25)-- (5.96,-2.52);
\draw [line width=3.2pt] (10.96,0.25)-- (10.96,-2.52);
\draw [line width=3.2pt] (0.95,0.25)-- (0.95,-0.7);
\end{tikzpicture}
\end{center}
\caption{The beginning game state for the ``Towers of Hanoi''.}
\label{fig:startstate}
\end{figure}
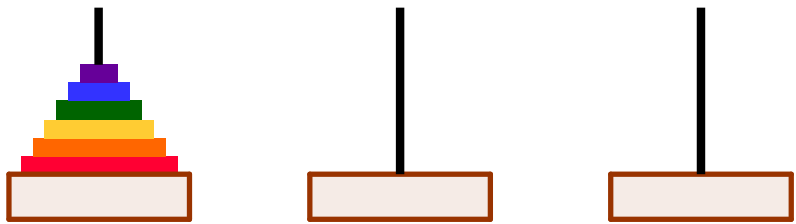

The restriction on the moves in the game limits a player's options to three possibilities. The first move, which will be called move $a$, transfers the smallest disk on pegs 2 and 3 between them. Likewise, move $b$ transfers the smallest disk on pegs 1 and 3 between them and move $c$ transfers the smallest disk on pegs 1 and 2 between them.

Any sequence of moves yields a game state which consists of the disks distributed across the three pegs such that on each peg, starting at the bottom and working up, the disks decrease in size. Thus, every game state in the $n$-disk game can be encoded as a sequence of $n$ integers between 1 and 3 in the following way: the first integer indicates the location of the smallest disk, the second integer indicates the location of the next smallest disk and so forth until the final integer indicates the location of the largest disk. For example, the Figure 2 shows a possible game state for the 6-disk game corresponding to the sequence $(2,1,3,2,2,1)$. 

\begin{figure}[ht]
\begin{center}
\begin{tikzpicture}[line join=round,>=triangle 45,x=1.0cm,y=1.0cm, scale=.8];
\fill[line width=2.pt,color=zzttqq,fill=zzttqq,fill opacity=0.1] (-0.54,-2.52) -- (-0.54,-3.27) -- (2.46,-3.27) -- (2.46,-2.52) -- cycle;
\fill[line width=2.pt,color=zzttqq,fill=zzttqq,fill opacity=0.1] (4.46,-2.52) -- (4.46,-3.27) -- (7.46,-3.27) -- (7.46,-2.52) -- cycle;
\fill[line width=2.pt,color=zzttqq,fill=zzttqq,fill opacity=0.1] (9.46,-2.52) -- (9.46,-3.27) -- (12.46,-3.27) -- (12.46,-2.52) -- cycle;
\fill[color=ffqqtt,fill=ffqqtt,fill opacity=1.0] (-0.34,-2.22) -- (-0.34,-2.52) -- (2.26,-2.52) -- (2.26,-2.22) -- cycle;
\fill[color=ffwwqq,fill=ffwwqq,fill opacity=1.0] (4.86,-2.22) -- (4.86,-2.52) -- (7.06,-2.52) -- (7.06,-2.22) -- cycle;
\fill[color=ffcctt,fill=ffcctt,fill opacity=1.0] (5.06,-1.92) -- (5.06,-2.22) -- (6.86,-2.22) -- (6.86,-1.92) -- cycle;
\fill[color=qqwuqq,fill=qqwuqq,fill opacity=1.0] (10.26,-2.22) -- (10.26,-2.52) -- (11.66,-2.52) -- (11.66,-2.22) -- cycle;
\fill[color=ttttff,fill=ttttff,fill opacity=1.0] (0.44,-1.92) -- (0.44,-2.22) -- (1.46,-2.22) -- (1.46,-1.92) -- cycle;
\fill[color=wwqqzz,fill=wwqqzz,fill opacity=1.0] (5.66,-1.62) -- (5.66,-1.92) -- (6.26,-1.92) -- (6.26,-1.62) -- cycle;
\draw [line width=2.pt,color=zzttqq, line cap=round] (-0.54,-2.52)-- (-0.54,-3.27);
\draw [line width=2.pt,color=zzttqq, line cap=round] (-0.54,-3.27)-- (2.46,-3.27);
\draw [line width=2.pt,color=zzttqq, line cap=round] (2.46,-3.27)-- (2.46,-2.52);
\draw [line width=2.pt,color=zzttqq, line cap=round] (2.46,-2.52)-- (-0.54,-2.52);
\draw [line width=2.pt,color=zzttqq, line cap=round] (4.46,-2.52)-- (4.46,-3.27);
\draw [line width=2.pt,color=zzttqq, line cap=round] (4.46,-3.27)-- (7.46,-3.27);
\draw [line width=2.pt,color=zzttqq, line cap=round] (7.46,-3.27)-- (7.46,-2.52);
\draw [line width=2.pt,color=zzttqq, line cap=round] (7.46,-2.52)-- (4.46,-2.52);
\draw [line width=2.pt,color=zzttqq, line cap=round] (9.46,-2.52)-- (9.46,-3.27);
\draw [line width=2.pt,color=zzttqq, line cap=round] (9.46,-3.27)-- (12.46,-3.27);
\draw [line width=2.pt,color=zzttqq, line cap=round] (12.46,-3.27)-- (12.46,-2.52);
\draw [line width=2.pt,color=zzttqq, line cap=round] (12.46,-2.52)-- (9.46,-2.52);
\draw [line width=3.2pt] (5.96,0.25)-- (5.96,-1.62);
\draw [line width=3.2pt] (10.96,0.25)-- (10.96,-2.22);
\draw [line width=3.2pt] (0.95,0.25)-- (0.95,-1.92);
\end{tikzpicture}
\end{center}
\caption{The game state corresponding to  $(2,1,3,2,2,1)$.}
\label{fig:gamestate}
\end{figure}
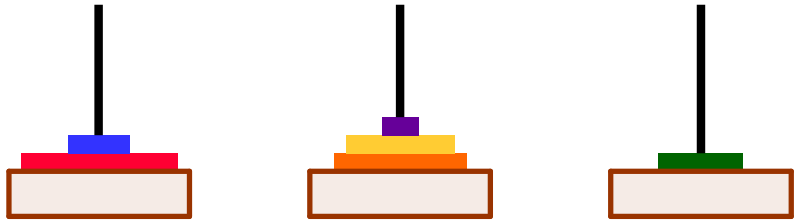

Integer sequences of length $n$ where the integers are between 1 and 3 can also be thought of as a vertex on the $n$th level in a rooted ternary tree as described in Section \ref{sec:branchgroups} and as seen in Figure 3.

\begin{figure}[ht]
\begin{center}
\begin{tikzpicture}[line cap=round,line join=round,>=triangle 45,x=1.0cm,y=1.0cm, scale=.4];
\draw  (0.,0.)-- (-9.,-2.);
\draw [line width=2pt](0.,0.)-- (0.,-2.);
\draw (0.,0.)-- (9.,-2.);
\draw (-9.,-2.)-- (-12.,-4.);
\draw  (-9.,-2.)-- (-9.,-4.);
\draw (-9.,-2.)-- (-6.,-4.);
\draw [line width=2pt](0.,-2.)-- (-3.,-4.);
\draw (0.,-2.)-- (0.,-4.);
\draw (0.,-2.)-- (3.,-4.);
\draw (9.,-2.)-- (6.,-4.);
\draw (9.,-2.)-- (9.,-4.);
\draw (9.,-2.)-- (12.,-4.);
\draw (-12.,-4.)-- (-13.,-6.);
\draw (-12.,-4.)-- (-12.,-6.);
\draw (-12.,-4.)-- (-11.,-6.);
\draw (-9.,-4.)-- (-10.,-6.);
\draw (-9.,-4.)-- (-9.,-6.);
\draw (-9.,-4.)-- (-8.,-6.);
\draw (-6.,-4.)-- (-7.,-6.);
\draw (-6.,-4.)-- (-6.,-6.);
\draw (-6.,-4.)-- (-5.,-6.);
\draw (-3.,-4.)-- (-4.,-6.);
\draw (-3.,-4.)-- (-3.,-6.);
\draw [line width=2pt](-3.,-4.)-- (-2.,-6.);
\draw (0.,-4.)-- (-1.,-6.);
\draw (0.,-4.)-- (0.,-6.);
\draw (0.,-4.)-- (1.,-6.);
\draw (3.,-4.)-- (2.,-6.);
\draw (3.,-4.)-- (3.,-6.);
\draw (3.,-4.)-- (4.,-6.);
\draw (6.,-4.)-- (5.,-6.);
\draw (6.,-4.)-- (6.,-6.);
\draw (6.,-4.)-- (7.,-6.);
\draw (9.,-4.)-- (8.,-6.);
\draw (9.,-4.)-- (9.,-6.);
\draw (9.,-4.)-- (10.,-6.);
\draw (12.,-4.)-- (11.,-6.);
\draw (12.,-4.)-- (12.,-6.);
\draw (12.,-4.)-- (13.,-6.);
\begin{tiny}
\draw[color=black] (0.14,0.36) node {$\varnothing$};
\draw[color=black] (-8.22,-2.19) node {$1$};
\draw[color=black] (0.6,-2.09) node {$2$};
\draw[color=black] (9.54,-2.09) node {$3$};
\draw[color=black] (-11.42,-3.99) node {$1$};
\draw[color=black] (-8.68,-3.88) node {$2$};
\draw[color=black] (-5.7,-3.9) node {$3$};
\draw[color=black] (-2.6,-4.04) node {$1$};
\draw[color=black] (0.38,-3.86) node {$2$};
\draw[color=black] (3.34,-3.88) node {$3$};
\draw[color=black] (6.46,-4.) node {$1$};
\draw[color=black] (9.28,-3.82) node {$2$};
\draw[color=black] (12.36,-3.92) node {$3$};
\draw[color=black] (-13.06,-6.42) node {$1$};
\draw[color=black] (-11.98,-6.42) node {$2$};
\draw[color=black] (-11.,-6.42) node {$3$};
\draw[color=black] (-10.02,-6.42) node {$1$};
\draw[color=black] (-9.06,-6.42) node {$2$};
\draw[color=black] (-7.98,-6.42) node {$3$};
\draw[color=black] (-6.98,-6.42) node {$1$};
\draw[color=black] (-5.98,-6.42) node {$2$};
\draw[color=black] (-4.98,-6.42) node {$3$};
\draw[color=black] (-4.04,-6.42) node {$1$};
\draw[color=black] (-2.96,-6.42) node {$2$};
\draw[color=black] (-1.96,-6.42) node {$3$};
\draw[color=black] (-0.96,-6.42) node {$1$};
\draw[color=black] (0.,-6.42) node {$2$};
\draw[color=black] (1.02,-6.42) node {$3$};
\draw[color=black] (2.,-6.42) node {$1$};
\draw[color=black] (3.02,-6.42) node {$2$};
\draw[color=black] (4.,-6.42) node {$3$};
\draw[color=black] (5.02,-6.42) node {$1$};
\draw[color=black] (6.08,-6.42) node {$2$};
\draw[color=black] (7.06,-6.42) node {$3$};
\draw[color=black] (8.04,-6.42) node {$1$};
\draw[color=black] (9.06,-6.42) node {$2$};
\draw[color=black] (10.04,-6.42) node {$3$};
\draw[color=black] (11.,-6.42) node {$1$};
\draw[color=black] (12.04,-6.42) node {$2$};
\draw[color=black] (13.16,-6.42) node {$3$};
\draw[thick, dotted] (-9, -7) -- (-9, -7.5);
\draw[thick, dotted] (0, -7) -- (0, -7.5);
\draw[thick, dotted] (9, -7) -- (9, -7.5);
\end{tiny}
\end{tikzpicture}
\end{center}
\caption{The rooted ternary tree}
\end{figure}
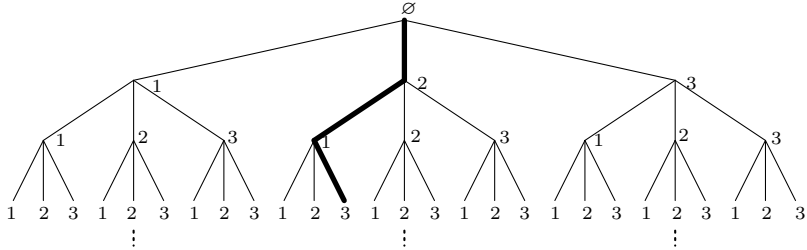

Since any move in the game takes one game state to another game state, i.e. takes one vertex on the $n$th level in the tree to another vertex on the $n$th level, each move can be thought of as an automorphism of the rooted ternary tree. Move $a$ should search for the first time a 2 or 3 appears in the path, and then switch them. Moves $b$ and $c$ should act similarly but instead with the numbers 1 and 3 and the numbers 1 and 2 respectively. For example, move $b$ takes the sequence $(2,1,3,2,2,1)$ to $(2,3,3,2,2,1)$.

In the same way we can define elements $a$, $b$ and $c$ acting on the whole ternary tree $X^*$. They are as follows:
$$a:=(a,1,1)\sigma_{23} \quad b:=(1,b,1)\sigma_{13} \quad c:=(1,1,c)\sigma_{12}$$
where we are using the isomorphism Aut$(X^*)\cong \text{Aut}(X^*)\wr S_3$.

\begin{figure}[ht]
\begin{center}
\begin{tikzpicture}[line cap=round,line join=round,>=triangle 45,x=1.0cm,y=1.0cm, scale=.42];
\draw (0.,0.)-- (-2.,-2.);
\draw (0.,0.)-- (0.,-2.);
\draw (0.,0.)-- (2.,-2.);
\draw (-2.,-2.)-- (-4.,-4.);
\draw (-2.,-2.)-- (-2.,-4.);
\draw (-2.,-2.)-- (0.,-4.);
\draw (-4.,-4.)-- (-6.,-6.);
\draw (-4.,-4.)-- (-4.,-6.);
\draw (-4.,-4.)-- (-2.,-6.);
\draw (6.,0.)-- (4.,-2.);
\draw (6.,0.)-- (6.04,-1.86);
\draw (6.,0.)-- (8.,-2.);
\draw (6.04,-1.86)-- (6.,-4.);
\draw (6.04,-1.86)-- (8.,-4.);
\draw (6.,-4.)-- (4.,-6.);
\draw (6.,-4.)-- (6.,-6.);
\draw (6.,-4.)-- (8.,-6.);
\draw (6.04,-1.86)-- (4.,-4.);
\draw (10.,-2.)-- (12.,0.);
\draw (12.,0.)-- (12.,-2.);
\draw (12.,0.)-- (14.,-2.);
\draw (14.,-2.)-- (12.,-4.);
\draw (14.,-2.)-- (14.,-4.);
\draw (14.,-2.)-- (16.,-4.);
\draw (16.,-4.)-- (14.,-6.);
\draw (16.,-4.)-- (16.,-6.);
\draw (16.,-4.)-- (18.,-6.);
\draw [thick, dotted] (2.,-2.3) -- (2.,-2.8);
\draw [thick, dotted] (0.,-4.3) -- (0.,-4.8);
\draw [thick, dotted] (-2.,-6.3) -- (-2.,-6.8);
\draw [thick, dotted] (6.,-6.3) -- (6.,-6.8);
\draw [thick, dotted] (10.,-2.3) -- (10.,-2.8);
\draw [thick, dotted] (12.,-4.3) -- (12.,-4.8);
\draw [thick, dotted] (14.,-6.3) -- (14.,-6.8);
\begin{scriptsize}
\draw[color=black] (0.14,0.36) node {$\sigma_{23}$};
\draw[color=black] (-1.30,-1.92) node {$\sigma_{23}$};
\draw[color=black] (0.3,-1.88) node {$1$};
\draw[color=black] (2.34,-1.88) node {$1$};
\draw[color=black] (-3.24,-3.82) node {$\sigma_{23}$};
\draw[color=black] (-1.66,-3.9) node {$1$};
\draw[color=black] (0.32,-3.9) node {$1$};
\draw[color=black] (-5.28,-5.94) node {$\sigma_{23}$};
\draw[color=black] (-3.68,-5.9) node {$1$};
\draw[color=black] (-1.64,-5.86) node {$1$};
\draw[color=black] (6.14,0.36) node {$\sigma_{13}$};
\draw[color=black] (4.36,-2.) node {$1$};
\draw[color=black] (6.7,-1.78) node {$\sigma_{13}$};
\draw[color=black] (8.34,-1.92) node {$1$};
\draw[color=black] (4.28,-3.96) node {$1$};
\draw[color=black] (6.7,-3.9) node {$\sigma_{13}$};
\draw[color=black] (8.42,-3.98) node {$1$};
\draw[color=black] (4.36,-5.88) node {$1$};
\draw[color=black] (6.7,-5.84) node {$\sigma_{13}$};
\draw[color=black] (8.38,-5.9) node {$1$};
\draw[color=black] (10.38,-1.94) node {$1$};
\draw[color=black] (12.14,0.36) node {$\sigma_{12}$};
\draw[color=black] (12.44,-1.9) node {$1$};
\draw[color=black] (14.68,-1.96) node {$\sigma_{12}$};
\draw[color=black] (12.68,-3.84) node {$1$};
\draw[color=black] (14.52,-3.84) node {$1$};
\draw[color=black] (16.8,-4) node {$\sigma_{12}$};
\draw[color=black] (14.62,-5.92) node {$1$};
\draw[color=black] (16.44,-5.84) node {$1$};
\draw[color=black] (18.6,-6.1) node {$\sigma_{12}$};
\end{scriptsize}
\end{tikzpicture}
\end{center}
\caption{The generators $a$, $b$, and $c$ of the Hanoi towers group}
\end{figure}
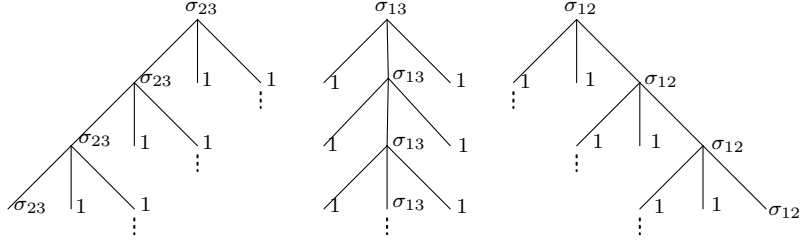

Figure 4 shows the labeling of the vertices by elements in $S_3$ for $a$, $b$, and $c$ respectively. Then the Hanoi towers group is $\Gamma:=~\langle a,b,c \rangle$. In \cite{BSZ12} a full presentation for $\Gamma$ is obtained. It is:

\begin{equation}
\label{presentation}
\Gamma=\langle a,b,c | a^2, b^2, c^2, \tau^n(w_1), \tau^n(w_2), \tau^n(w_3), \tau^n(w_4) \text{ for all } n\geq0 \rangle
\end{equation}

where $\tau$ is an endomorphism of $\Gamma$ defined by the substitution
$$a \mapsto a \qquad b \mapsto b^c \qquad c \mapsto c^b$$
and where 
$$w_1=[b,a][b,c][c,a][a,c]^b[a,b]^c[c,b]$$
$$w_2=[b,c]^a[c,b][b,a][c,a][a,b][a,c]^b$$
$$w_3=[c,b][a,b][b,c]^a[c,b]^2[b,a][b,c]^a[b,c]^a$$
$$w_4=[b,c]^a[a.b]^c[b,a]^2[a,c][a,b]^c[c,a][c,b].$$

\section{Properties of the Hanoi towers group and the proof of the main result}

In this section we compute the rigid kernel for $\Gamma$. For any branch group $G$, the rigid kernel is

\[
\ker(\widetilde{G} \rightarrow \overline{G})=\displaystyle\varprojlim_{n\geq 1} \St_G(n)/\Ri_G(n)
\]

\noindent where the maps $\rho_{n,n+k}: \St_G(n+k)/\Ri_G(n+k)\rightarrow \St_G(n)/\Ri_G(n)$ come from the natural inclusions 
${\St_G(n+k)\hookrightarrow \St_G(n)}$ and\\ ${\Ri_G(n+~k) \hookrightarrow \Ri_G(n)}.$  This is because, by definition, $\widetilde{G}$ is the subgroup of $\displaystyle\prod_{n\geq 1} G/\Ri_G(n)$ consisting of sequences $(g_n \Ri_G(n))_{n\geq 1}$ where 
\[g_{n+1}\Ri_G(n)=g_n\Ri_G(n)\]
for all $n$. Likewise, $\overline{G}$ is the subgroup of $\displaystyle\prod_{n\geq 1} G/\St_G(n)$ consisting of tuples $(h_n \St_G(n))_{n\geq 1}$ where 
\[h_{n+1}\St_G(n)=h_n\St_G(n)\] for all $n$. Thus the kernel of the map $\widetilde{G} \rightarrow \overline{G}$ is precisely those sequences $(g_n \Ri_G(n))_{n\geq 1}$ where for all $n$, $g_n\in \St_G(n)$, i.e. $\displaystyle\varprojlim_{n\geq 1} \St_G(n)/\Ri_G(n)$.

Note that the maps $\rho_{n,n+k}$ are far from being surjective and most of our work in computing the rigid kernel for $\Gamma$ will be in determining the image ${\rho_{n,n+k}(\St_\Gamma(n+~k)/\Ri_\Gamma(n+k))}$ for all $n$ and $k$.

First we observe that since each generator of $\Gamma$ has order 2, any element in $\Gamma$ can be expressed as a word in $a$, $b$, and $c$ using only the positive alphabet. Further, since each relator in (\ref{presentation}) can be written as a product of commutators, $\Gamma/\Gamma'\cong (C_2)^3$ where $C_2$ is a cyclic group of order 2. Thus a word in $a$, $b$, and $c$ is in $\Gamma'$ if and only if the sum of the exponents on each letter is congruent to 0 modulo 2.

Using the Reidemeister-Schreier method, we obtain a generating set for $\St_\Gamma(1)$: 
$$\alpha:=acab=(a,cb,a)_1 \qquad \beta:=abac=(a,a,bc)_1$$
$$\delta:=bcba=(ca,b,b)_1 \qquad \gamma:=babc=(b,b,ac)_1.$$

A group $G \leq $Aut$(\mathcal{T})$ is \textbf{level transitive} if it acts transitively on each level of Aut$(\mathcal{T})$. If $\mathcal{T}=X^*$, the infinite d-ary tree, then $G\leq \text{Aut}(X^*)$ is called \textbf{self-replicating} if $\St_G(u)@u=G$ for any vertex $u$. If $G$ is both self-replicating and acts transitively on the first level of the tree, then $G$ is level transitive.

As $\Gamma/\St_\Gamma(1)=S_3$, $\Gamma$ clearly acts transitively on the first level of the ternary tree. Thus to show it is level  transitive, it is sufficient to show it is self-replicating.

\begin{lemma}
\label{transrec}
$\Gamma$ is self-replicating.
\end{lemma}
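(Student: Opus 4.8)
The plan is to reduce the statement to a finite computation at the first level and then propagate it up the tree by induction. Recall that $\Gamma$ being self-replicating means $\St_\Gamma(u)@u = \Gamma$ for every vertex $u$; since $\St_\Gamma(u) \leq \Gamma$ the inclusion $\St_\Gamma(u)@u \leq \Gamma$ is automatic, so the whole content is surjectivity.

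First I would handle the first level. Because the level stabilizer $\St_\Gamma(1)$ is contained in the stabilizer of each first-level vertex $x \in \{1,2,3\}$, it suffices to show $\St_\Gamma(1)@x = \Gamma$ for each such $x$. The map $g \mapsto g@x$ is a homomorphism on $\St_\Gamma(1)$, so its image is generated by the states at $x$ of the Reidemeister--Schreier generators $\alpha,\beta,\delta,\gamma$ computed above. Reading off coordinates from $\alpha=(a,cb,a)_1$, $\beta=(a,a,bc)_1$, $\delta=(ca,b,b)_1$, $\gamma=(b,b,ac)_1$ gives at $x=1$ the subgroup $\langle a, ca, b\rangle$, which contains $c=(ca)a$ and hence equals $\Gamma$; the cases $x=2,3$ are analogous, producing $\langle cb, a, b\rangle$ (use $(cb)b=c$) and $\langle a, bc, b, ac\rangle$ (use $a(ac)=c$). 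This establishes $\St_\Gamma(x)@x = \Gamma$ for every first-level vertex $x$.

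For an arbitrary vertex I would induct on the level $|u|$. Write $u=(x,v)$ with $x$ a first-level letter and $v$ a vertex of level $|u|-1$ in the subtree rooted at $x$, which is canonically $X^*$. Fixing $u$ forces fixing $x$, so $\St_\Gamma(u) = \{\, g \in \St_\Gamma(x) : g@x \text{ fixes } v \,\}$. Using the base-case surjectivity $\St_\Gamma(x)@x = \Gamma$, the homomorphism $g \mapsto g@x$ carries $\St_\Gamma(u)$ onto the vertex stabilizer $\St_\Gamma(v)$: any $h \in \Gamma$ fixing $v$ lifts to some $g \in \St_\Gamma(x)$ with $g@x = h$, and such a $g$ automatically fixes $u$. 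Composing states via $g@u = (g@x)@v$ then yields $\St_\Gamma(u)@u = \St_\Gamma(v)@v$, which equals $\Gamma$ by the inductive hypothesis since $|v| = |u|-1$.

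The genuine work is the base case: once the coordinate projections of the four generators of $\St_\Gamma(1)$ are each seen to generate all of $\Gamma$, the inductive step is a formal consequence of the compositionality of states together with the lifting argument. I expect the only real obstacle to be bookkeeping --- confirming the stated decompositions of $\alpha,\beta,\delta,\gamma$ into coordinates (which follow from $a=(a,1,1)\sigma_{23}$, $b=(1,b,1)\sigma_{13}$, $c=(1,1,c)\sigma_{12}$ and the wreath multiplication) and checking that each resulting three-term generating set collapses to $\{a,b,c\}$.
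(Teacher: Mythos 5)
Your proof is correct and takes essentially the same approach as the paper: verify at level one that the states of the $\St_\Gamma(1)$-generators $\alpha,\beta,\delta,\gamma$ at each first-level vertex generate $\Gamma$, then propagate by induction using compositionality of states. The only (cosmetic) difference is the direction of the induction --- you peel off the first letter of $u$ and lift through $\St_\Gamma(x)@x=\Gamma$, whereas the paper appends a child at the bottom by choosing elements of $\St_\Gamma(v)$ acting as $\alpha,\beta,\delta,\gamma$ on the subtree rooted at $v$; both hinge on the same base-case computation.
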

\begin{proof}
From the generators obtained for $\St_\Gamma(1)$ above we see \\ ${\St_\Gamma(u)@u=\Gamma}$ for any vertex $u$ of level 1. Now suppose for any vertex $v$ of level $n$, $\St_\Gamma(v)@v=\Gamma$ and let $w$ be an immediate descendant of $v$. Then let $p$, $q$, $r$, and $s$ be the elements in $\St_\Gamma(v)$ that act as $\alpha$, $\beta$, $\delta$, and $\gamma$ on the subtree rooted at $v$. Then, $p$, $q$, $r$, and $s$ are in $\St_\Gamma(w)$ and $p@w$, $q@w$, $r@w$, and $s@w$ generate $\Gamma$. Thus, $\St_\Gamma(w)@w=\Gamma$. 
\end{proof}

$G \leq \text{Aut}(X^*)$ is said to be \textbf{self-similar} if $G@u$ is contained in $G$ for any $u \in V(X^*)$. 

For a vertex $u \in V(X^*)$ and for an element $g \in \text{Aut}(X^*)$, $u*g$ will be used to denote the automorphism of $X^*$ described by $(1,\dots,1, g, 1, \dots, 1)_n$ where $n=|u|$ and $g$ is in the $u$th coordinate; in other words $u*g$ acts as $g$ on the subtree rooted at $u$ and acts trivially outside this subtree. For a subgroup $K\leq \text{Aut}(X^*)$, $u*K:=\{u*k| k \in K\}$. Further, as $X^n$ represents the set of vertices of level $n$ in the tree, $X^n*K$ will be used to denote $\displaystyle\prod_{|u|=n} u* K$ where $K \leq \text{Aut}(X^*)$. A group $G\leq \text{Aut}(X^*)$ is said to be \textbf{regular branch} if it is level transitive and there is a subgroup $K$ with finite index in $G$ such that $u*K\leq K$ for all $u \in V(X^*)$ and such that $X^n*K$ has finite index in $G$ for all $n$. In this case, $K$ is called a \textbf{branching subgoup}. If a group is regular branch then it is also branch as $X^n*K\leq \Ri_G(n)$. 

An important observation that will be used frequently in the remainder of the paper is that if $X^n*H\leq G$, then

\begin{equation}
\label{observation}
\St_G(n+m)~\cap X^n*H~=X^n*\St_H(m).
\end{equation}

 This is because $X^n*H$ describes a disjoint action on each subtree rooted at the $n$th level, and so on each of these subtrees $\St_G(n+m)\cap X^n*H$ describes the collection of elements that are contained in $H$ and stabilize the $m$th level.

\begin{lemma}
\label{branchingsubgroup}
$\Gamma$ is a self-similar, regular branch group with branching subgroup $\Gamma'$.
\end{lemma}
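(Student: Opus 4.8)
The plan is to check, for $K=\Gamma'$, the three defining conditions of a regular branch group, after first disposing of self-similarity. Finiteness of the index is immediate: since $\Gamma/\Gamma'\cong(C_2)^3$, we have $[\Gamma:\Gamma']=8$. For self-similarity, note that each generator has all of its first-level states in $\{1,a,b,c\}\subseteq\Gamma$; since the state of a product satisfies $(gh)@i=(g@i)\,(h@{i^{\sigma}})$, where $\sigma$ is the root permutation of $g$, the state of any word in $a,b,c$ at a first-level vertex is again such a word, and descending the tree inductively gives $\Gamma@u\subseteq\Gamma$ for every $u$.

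The core of the argument is the containment $u*\Gamma'\le\Gamma'$ for all $u$. Writing $u*g=u_1*(u_2*\cdots(u_n*g))$ and using that every proper subtree is a canonical copy of $X^*$, an induction on $|u|$ built on self-replication (Lemma~\ref{transrec}) reduces this to the first level: I must exhibit, for each $g\in\Gamma'$, the pure single-coordinate elements $(g,1,1)_1$, $(1,g,1)_1$, $(1,1,g)_1$ inside $\Gamma'$. The key observation is that squaring the Reidemeister--Schreier generators of $\St_\Gamma(1)$ already produces pure elements of $\Gamma'$: for example $\delta^2=((ca)^2,1,1)_1=([c,a],1,1)_1$ and $\alpha^2=(1,[c,b],1)_1$, while $\beta^2$ and $\gamma^2$ place $[b,c]$ and $[a,c]$ in the third coordinate. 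This hands over two of the three basic commutators but not $[a,b]$. To manufacture it I would take the commutator of two stabilizer generators, $[\alpha,\delta]=([c,a],[b,c],[a,b])_1\in\Gamma'$ (computed coordinatewise, using $[a,ca]=[c,a]$ and $[cb,b]=[b,c]$), and cancel its first two coordinates by multiplying with $\delta^{-2}$ and $\alpha^2$, obtaining the pure element $(1,1,[a,b])_1\in\Gamma'$.

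At this point all three of $[a,b]$, $[b,c]$, $[a,c]$ are realized as states of pure third-coordinate elements of $\Gamma'$. I would then enlarge the set of attainable states by conjugation: for $h\in\St_\Gamma(1)$ one has $(1,1,m)_1^{\,h}=(1,1,m^{\,h@3})_1$, this conjugate remains in $\Gamma'$ by normality, and $h@3$ runs over all of $\Gamma$ by self-replication. Thus every $\Gamma$-conjugate of the three basic commutators appears as a state, and since their normal closure is exactly $\Gamma'$ (the quotient being abelian), we get $(1,1,g)_1\in\Gamma'$ for all $g\in\Gamma'$. The other two coordinates are reached by conjugating by $a,b,c$, which normalize $\Gamma'$ and permute the first-level subtrees transitively. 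This establishes $u*\Gamma'\le\Gamma'$. The same work shows $\Gamma'\times\Gamma'\times\Gamma'$ lies in the first-level image of $\St_\Gamma(1)\cap\Gamma'$; since that image sits in $\Gamma\times\Gamma\times\Gamma$ and contains the finite-index subgroup $\Gamma'\times\Gamma'\times\Gamma'$, it has finite index there, whence $[\Gamma:X^1*\Gamma']<\infty$, and a routine induction grouping level-$n$ vertices by their first coordinate (via (\ref{observation})) bounds $[\Gamma:X^n*\Gamma']$ for all $n$.

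I expect the genuine obstacle to be producing the pure element with state $[a,b]$: the squares of the generators only surrender the commutators built from a single fixed peg, and the point is to recognize that one commutator $[\alpha,\delta]$ of stabilizer generators, once its two unwanted coordinates are cancelled against $\delta^{-2}$ and $\alpha^2$, delivers the last basic commutator. The ensuing conjugation and index bookkeeping are routine by comparison.
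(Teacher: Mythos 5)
Your proposal is correct and follows essentially the same strategy as the paper: exhibit each basic commutator $[a,b]$, $[a,c]$, $[b,c]$ as the sole nontrivial first-level state of an explicit element of $\Gamma'$, then use conjugation by first-level stabilizer elements (self-replication) and level transitivity to capture all of $X*\Gamma'$, and finally induct on $n$. The only difference is in the witnesses chosen --- the paper uses $(acbc)^2$, $(abcb)^2$, and $c(baca)^2c$, all concentrated in the first coordinate, whereas you use squares of the Reidemeister--Schreier generators together with $[\alpha,\delta]$ and a cancellation to recover the missing $[a,b]$ --- and both computations check out.
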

\begin{proof}
The definition of the generators of $\Gamma$ easily implies that $\Gamma$ is self-similar. We will show by induction that $X^n * \Gamma'\leq \Gamma'$. For $n=1$, observe that 
$$(acbc)^2=(abab,1,1)_1=([a,b],1,1)_1$$
$$(abcb)^2=(acac,1,1)_1=([a,c],1,1)_1$$
$$c(baca)^2c=(bcbc,1,1)_1=([b,c],1,1)_1$$

\noindent and $(acbc)^2$, $(abcb)^2$, and $c(baca)^2c$ are all in $\Gamma'$ since $\Gamma/\Gamma'$ is an elementary abelian $2$-group.

From the description of the generators for $\St_\Gamma(1)$, we see that for all $g\in \Gamma$ there is an element $\tilde{g} \in \St_\Gamma(1)$ whose state in the first coordinate is $g$. Conjugating $(acbc)^2$ by $\tilde{g}$ produces the element $([a,b]^g, 1,1)_1$. Likewise, we can obtain the element that has any conjugate of $[a,c]$ or $[b,c]$ in the first coordinate and 1 in the second and third coordinates. As $\Gamma$ is transitive on all levels of $\mathcal{T}$, we obtain $X * \Gamma'\leq \Gamma'$.

Now assume for some $n\geq 1$, that $X^n * \Gamma' \leq \Gamma'$. By the base case, each copy of $\Gamma'$ contains a copy of $X*\Gamma'$. Therefore, $X^n*(X*\Gamma')\leq X^n * \Gamma'\leq \Gamma'$. But $X^n*(X*\Gamma')=X^{n+1}*\Gamma'$.
\end{proof}

\begin{lemma}
\label{rist}
For all $n\geq 1$, $\Ri_\Gamma(n)=X^n * \Gamma'$.
\end{lemma}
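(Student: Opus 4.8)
The plan is to prove the two inclusions $X^n * \Gamma' \le \Ri_\Gamma(n)$ and $\Ri_\Gamma(n) \le X^n * \Gamma'$ for every $n \ge 1$. The first is immediate from Lemma~\ref{branchingsubgroup}: since $X^n * \Gamma' \le \Gamma' \le \Gamma$, every element $u*g$ with $|u|=n$ and $g \in \Gamma'$ lies in $\Gamma$ and acts trivially off $\mathcal{T}_u$, so $u*\Gamma' \le \Ri_\Gamma(u)$ and hence $X^n*\Gamma' = \prod_{|u|=n} u*\Gamma' \le \prod_{|u|=n}\Ri_\Gamma(u) = \Ri_\Gamma(n)$. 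The reverse inclusion is the real content, and everything reduces to the following claim at level one.

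\textbf{Key claim.} If $i \in X$ and $i*h \in \Gamma$ for some $h \in \text{Aut}(X^*)$, then $h \in \Gamma'$. To prove it I would use the generators $\alpha,\beta,\delta,\gamma$ of $\St_\Gamma(1)$ recorded above. For each $i \in \{1,2,3\}$ the map $\phi_i \colon \St_\Gamma(1) \to \Gamma/\Gamma' \cong C_2^3$ sending $g \mapsto (g@i)\Gamma'$ is a homomorphism: taking the $i$th state is a homomorphism on $\St_\Gamma(1)$ because the root permutation is trivial there, the states lie in $\Gamma$ by self-similarity (Lemma~\ref{branchingsubgroup}), and we postcompose with abelianization. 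Since the target is abelian of exponent $2$, $\phi_i(g)$ depends only on the parities $x_\alpha,x_\beta,x_\delta,x_\gamma \in \mathbb{F}_2$ of the total exponents of the four generators in a word for $g$. Reading off the states of $\alpha,\beta,\delta,\gamma$ and abelianizing expresses each $\phi_i$ as an explicit $\mathbb{F}_2$-linear map $\mathbb{F}_2^4 \to \mathbb{F}_2^3$ in the coordinates $(x_\alpha,x_\beta,x_\delta,x_\gamma)$. An element $i*h \in \St_\Gamma(1)$ has trivial state, hence $\phi_j = 0$, at the two vertices $j \ne i$; the computation then shows that these two vanishing conditions already force all four parities to vanish, so $\phi_i = 0$ as well, i.e. $h = (i*h)@i \in \Gamma'$. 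This small $\mathbb{F}_2$-linear-algebra check—verifying for each $i$ that $\ker\bigl((\phi_j)_{j\ne i}\bigr) \subseteq \ker \phi_i$—is where the specific arithmetic of $\Gamma$ enters, and it is the main obstacle: it is exactly the step distinguishing $\Gamma$ from a group whose rigid stabilizers would be larger.

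With the key claim in hand I would finish by an induction showing, for every vertex $u$ with $|u| = n \ge 1$, that $u*h \in \Gamma$ implies $h \in \Gamma'$. Writing $u = (u_1, u')$ with $u'$ a vertex of level $n-1$, we have $u*h = u_1 * (u'*h)$, so the key claim gives $u'*h \in \Gamma' \le \Gamma$; if $n \ge 2$ the inductive hypothesis applied to $u'$ then yields $h \in \Gamma'$, while the base case $n=1$ is the key claim itself. Finally, for any $g \in \Ri_\Gamma(u)$ with $|u| = n$, self-similarity gives $g = u*(g@u)$ with $g@u \in \Gamma$, whence $g@u \in \Gamma'$ and $g \in u*\Gamma'$; taking the internal direct product over $|u|=n$ gives $\Ri_\Gamma(n) \le X^n*\Gamma'$, which together with the first inclusion completes the proof.
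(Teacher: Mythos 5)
Your proof is correct and takes essentially the same approach as the paper: your key claim is precisely the paper's base case, since your three maps $\phi_1,\phi_2,\phi_3$ assembled together are the paper's map $\St_\Gamma(1)\to(\mathbb{Z}/2\mathbb{Z})^9$ with image $U$ spanned by $\widetilde{\alpha},\widetilde{\beta},\widetilde{\delta},\widetilde{\gamma}$, and your check that $\ker\bigl((\phi_j)_{j\neq i}\bigr)\subseteq\ker\phi_i$ is the paper's verification that $W\cap U=\{0\}$. The only difference is bookkeeping in the induction --- you peel off the first letter of the vertex $u$ element by element, whereas the paper uses the subgroup identity $\Ri_\Gamma(n+1)=\Ri_\Gamma(n+1)\cap X^n*\Gamma'=X^n*\Ri_{\Gamma'}(1)\leq X^{n+1}*\Gamma'$ --- which does not change the substance of the argument.
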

\begin{proof}
The proof is by induction on the level. By Lemma~\ref{branchingsubgroup}, ${X*~\Gamma'} \leq {\Ri_\Gamma(1)}\leq {\St_\Gamma(1)}\leq {X*\Gamma}$. Note that $(X*\Gamma)/(X*\Gamma') \cong (\Gamma/\Gamma')^3\cong [(\mathbb{Z}/2\mathbb{Z})^3]^3 \cong (\mathbb{Z}/2\mathbb{Z})^9$. Consider $H$, the rigid stabilizer of the first vertex of level $1$. The image H in $(\mathbb{Z}/2\mathbb{Z})^9$ is contained in the subspace $W$ consisting of vectors which have $0$ in the $i$th coordinate for $i\geq 4$. On the other hand, the image $U$ of $\St_\Gamma(1)$ in $(\mathbb{Z}/2\mathbb{Z})^9$ is spanned by the images of $\alpha$, $\beta$, $\delta$, and $\gamma$ which are
$$\widetilde{\alpha}=(1,0,0,0,1,1,1,0,0)$$
$$\widetilde{\beta}=(1,0,0,1,0,0,0,1,1)$$
$$\widetilde{\delta}=(1,0,1,0,1,0,0,1,0)$$
$$\widetilde{\gamma}=(0,1,0,0,1,0,1,0,1).$$

It is a simple exercise to see that $W\cap U=\{0\}$. It follows that $H\leq X*\Gamma'$ and thus $\Ri_\Gamma(1)=X*\Gamma'$.

Now assume for some $n\geq 1$ that $\Ri_\Gamma(n)=X^n*\Gamma'$. Then, again, by Lemma~\ref{branchingsubgroup}, $X^{n+1}*\Gamma' \leq \Ri_\Gamma(n+1)=\Ri_\Gamma(n+1)\cap X^n*\Gamma'=
X^n*\Ri_{\Gamma'}(1)\leq X^n*\Ri_\Gamma(1)=X^{n+1}*\Gamma'$, giving $X^{n+1}*\Gamma'=\Ri_\Gamma(n+~1)$.
\end{proof}

\begin{corollary}
\label{ristquotient}
For all $n$, $\Ri_\Gamma(n)\St_\Gamma(n+1)/\St_\Gamma(n+1)=(A_3)^{3^n}$ where $A_3$ is the alternating group on 3 letters.
\end{corollary}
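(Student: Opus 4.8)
The plan is to read the corollary off directly from the structure of $\Ri_\Gamma(n)$ established in Lemma~\ref{rist}. Since $\Ri_\Gamma(n)\leq \St_\Gamma(n)$, the group $\Ri_\Gamma(n)\St_\Gamma(n+1)/\St_\Gamma(n+1)$ is, by the second isomorphism theorem, nothing but the image of $\Ri_\Gamma(n)$ under the canonical embedding $\St_\Gamma(n)/\St_\Gamma(n+1)\hookrightarrow (S_3)^{3^n}$ recorded in Section~\ref{sec:branchgroups}: an element $(g_1,\dots,g_{3^n})_n$ is sent to the $3^n$-tuple of its level-$n$ labels, i.e. the root permutations of its states $g_1,\dots,g_{3^n}$. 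So I would simply compute this image.

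By Lemma~\ref{rist}, $\Ri_\Gamma(n)=X^n*\Gamma'$, and this is the internal direct product $\prod_{|u|=n}u*\Gamma'$ over the $3^n$ vertices of level $n$. The factor $u*\Gamma'$ acts trivially off the subtree rooted at $u$, so its image in $(S_3)^{3^n}$ is concentrated in the $u$th coordinate, where it equals the image of $\Gamma'$ under the root-permutation map, and is trivial in every other coordinate. Consequently the total image splits as a direct product over the $3^n$ vertices, and it suffices to identify the image of $\Gamma'$ in a single copy of $S_3$.

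To do this I would introduce the homomorphism $\pi\colon\Gamma\to S_3$, $g\mapsto g(\emptyset)$, reading off the permutation at the root. From the definitions of the generators one has $\pi(a)=\sigma_{23}$, $\pi(b)=\sigma_{13}$, $\pi(c)=\sigma_{12}$, so $\pi$ is surjective. Surjectivity then gives $\pi(\Gamma')=\pi([\Gamma,\Gamma])=[\pi(\Gamma),\pi(\Gamma)]=[S_3,S_3]=A_3$. Combining this with the direct-product decomposition of the previous paragraph, the image of $\Ri_\Gamma(n)$ in $(S_3)^{3^n}$ is exactly $(A_3)^{3^n}$, which is the asserted description of $\Ri_\Gamma(n)\St_\Gamma(n+1)/\St_\Gamma(n+1)$.

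The only step needing genuine care is the equality $\pi(\Gamma')=A_3$: one must use that $\pi$ is surjective (immediate from the three transpositions above) together with the elementary fact that a surjection carries the commutator subgroup onto the commutator subgroup, which rules out any proper subgroup of $A_3$ occurring in a coordinate. Everything else is bookkeeping furnished by the decomposition $\Ri_\Gamma(n)=X^n*\Gamma'$ of Lemma~\ref{rist}, so I expect no serious obstacle here.
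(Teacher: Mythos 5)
Your proposal is correct and follows essentially the same route as the paper: both reduce the statement via the second isomorphism theorem and the decomposition $\Ri_\Gamma(n)=X^n*\Gamma'$ to the single fact that the image of $\Gamma'$ in $\Gamma/\St_\Gamma(1)\cong S_3$ is $A_3$. The only difference is cosmetic — you compute the image directly inside $(S_3)^{3^n}$ and spell out why $\pi(\Gamma')=[S_3,S_3]=A_3$, whereas the paper routes the same computation through the identity $(X^n*\Gamma')\cap\St_\Gamma(n+1)=X^n*\St_{\Gamma'}(1)$ and states the surjection onto $A_3$ without proof.
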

\begin{proof}
The projection $\Gamma\rightarrow \Gamma/\St_\Gamma(1)\cong S_3$ takes $\Gamma'$ onto $A_3$, hence $\Gamma'/\St_{\Gamma'}(1)\cong A_3$. Further,
\[
{\Ri_\Gamma(n)\St_\Gamma(n+1)/\St_\Gamma(n+1)}\cong {X^n*\Gamma'/[(X^n*\Gamma')\cap {\St_\Gamma(n+1)]}}
\]
\[
\cong {X^n*\Gamma'/X^n*\St_{\Gamma'}(1)}\cong {(\Gamma'/\St_{\Gamma'}(1))^{3^n}}\cong {(A_3)^{3^n}}. 
\]
\end{proof}

\begin{corollary}
\label{elab}
The rigid kernel for $\Gamma$ is an elementary abelian $2$-group. 
\end{corollary}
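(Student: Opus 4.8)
The plan is to show that every finite quotient $\St_\Gamma(n)/\Ri_\Gamma(n)$ occurring in the inverse system is already an elementary abelian $2$-group, and then to descend this to the inverse limit. Recall that a group in which every element satisfies $x^2=1$ is automatically abelian, hence elementary abelian of exponent $2$, and that the defining relation $x^2=1$ is inherited by subgroups and by arbitrary direct products. Since the rigid kernel is $\varprojlim_{n\geq 1}\St_\Gamma(n)/\Ri_\Gamma(n)$, which embeds into the product $\prod_{n\geq 1}\St_\Gamma(n)/\Ri_\Gamma(n)$, it therefore suffices to prove the claim one level at a time.

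To handle a fixed level $n$, I would realize $\St_\Gamma(n)/\Ri_\Gamma(n)$ as a subgroup of an explicit elementary abelian $2$-group. By Lemma~\ref{rist}, $\Ri_\Gamma(n)=X^n*\Gamma'$. Consider the state map $\St_\Gamma(n)\to \Gamma^{3^n}$ sending $g\mapsto (g@u)_{|u|=n}$. Because $g$ fixes level $n$, no permutation at a lower level mixes the coordinates, so states multiply coordinatewise and this map is a homomorphism; self-similarity (established in Lemma~\ref{branchingsubgroup}) guarantees each state $g@u$ lies in $\Gamma$, so the map indeed lands in $\Gamma^{3^n}$. Composing with the abelianization $\Gamma^{3^n}\to (\Gamma/\Gamma')^{3^n}$ yields a homomorphism $\phi\colon \St_\Gamma(n)\to (\Gamma/\Gamma')^{3^n}$.

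The next step is to identify $\ker\phi$. An element $g\in\St_\Gamma(n)$ lies in $\ker\phi$ precisely when $g@u\in\Gamma'$ for every vertex $u$ of level $n$. Writing $g=\prod_{|u|=n}u*(g@u)$, this condition is exactly $g\in \prod_{|u|=n}u*\Gamma'=X^n*\Gamma'=\Ri_\Gamma(n)$. Hence $\phi$ induces an embedding $\St_\Gamma(n)/\Ri_\Gamma(n)\hookrightarrow (\Gamma/\Gamma')^{3^n}$. Since $\Gamma/\Gamma'\cong(C_2)^3$, the target is $(C_2)^{3^{n+1}}$, an elementary abelian $2$-group, and therefore so is $\St_\Gamma(n)/\Ri_\Gamma(n)$.

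Passing to the inverse limit then finishes the proof: the rigid kernel embeds into $\prod_{n\geq 1}\St_\Gamma(n)/\Ri_\Gamma(n)$, a product of exponent-$2$ abelian groups, so every element of the rigid kernel also satisfies $x^2=1$, making it elementary abelian of exponent $2$. I do not anticipate a genuine obstacle here; the only point needing care is the verification that the state map restricted to $\St_\Gamma(n)$ is a coordinatewise homomorphism with kernel exactly $X^n*\Gamma'$, and the substantive content has already been front-loaded into Lemma~\ref{rist} together with the computation $\Gamma/\Gamma'\cong(C_2)^3$. The remaining difficulty — pinning down the kernel to be precisely the Klein four-group rather than merely a $2$-group — is a separate matter that must come from analyzing the non-surjective transition maps $\rho_{n,n+k}$, not from this corollary.
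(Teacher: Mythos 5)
Your proposal is correct and is essentially the paper's own argument: the paper likewise observes that $\St_\Gamma(n)\leq X^n*\Gamma$, so $\St_\Gamma(n)/\Ri_\Gamma(n)=\St_\Gamma(n)/X^n*\Gamma'$ embeds in $X^n*\Gamma/X^n*\Gamma'\cong(\Gamma/\Gamma')^{3^n}$, an elementary abelian $2$-group, and then passes to the inverse limit. Your explicit description of the state map and the verification that its kernel is exactly $X^n*\Gamma'$ merely spells out the details the paper leaves implicit.
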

\begin{proof}
Since $\St_{\Gamma}(n)\leq X^n*\Gamma$, we have 
\[{\St_\Gamma(n)/\Ri_\Gamma(n)}={\St_\Gamma(n)/X^n*\Gamma'}\]
is a subspace of $X^n*\Gamma/X^n*\Gamma'\cong (\Gamma/\Gamma')^{3^n}$ which is an elementary abelian $2$-group. An inverse limit of elementary abelian $2$-groups is an elementary abelian $2$-group.
\end{proof}

\begin{corollary}
\label{index}
$|\St_\Gamma(1)/\Ri_\Gamma(1)|=16$  and $|\St_{\Gamma'}(1)/\Ri_{\Gamma'}(1)|=4$.
\end{corollary}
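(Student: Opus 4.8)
The plan is to reduce both indices to dimension counts over $\mathbb{F}_2$, exploiting that $\Ri_\Gamma(1)=X*\Gamma'$ (Lemma~\ref{rist}) and that the relevant images in $(X*\Gamma)/(X*\Gamma')\cong(\mathbb{Z}/2\mathbb{Z})^9$ have already been written down. All the arithmetic takes place in elementary abelian $2$-groups, so each order will be a power of $2$ read off from a rank.

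For $|\St_\Gamma(1)/\Ri_\Gamma(1)|$, I would first note that $\Ri_\Gamma(1)=X*\Gamma'$ lies between $\St_\Gamma(1)$ and $X*\Gamma$, so the quotient $\St_\Gamma(1)/\Ri_\Gamma(1)$ is precisely the image $U$ of $\St_\Gamma(1)$ in $(X*\Gamma)/(X*\Gamma')$. That image was computed inside the proof of Lemma~\ref{rist}: it is the span of $\widetilde\alpha,\widetilde\beta,\widetilde\delta,\widetilde\gamma$. The remaining task is to check these four vectors are linearly independent over $\mathbb{F}_2$, which is immediate because coordinates $6$, $4$, $3$, $2$ single out $\widetilde\alpha$, $\widetilde\beta$, $\widetilde\delta$, $\widetilde\gamma$ respectively (each is the unique one among the four with a nonzero entry in its distinguished coordinate). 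Hence $\dim U=4$ and $|U|=16$.

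For $|\St_{\Gamma'}(1)/\Ri_{\Gamma'}(1)|$, the key preliminary step is to establish $\Ri_{\Gamma'}(1)=X*\Gamma'=\Ri_\Gamma(1)$. One inclusion is automatic since $\Ri_{\Gamma'}(1)\leq\Ri_\Gamma(1)$; for the reverse I would use Lemma~\ref{branchingsubgroup} to note that for each first-level vertex $u$ one has $u*\Gamma'\leq X*\Gamma'\leq\Gamma'$, so $u*\Gamma'\leq\Ri_{\Gamma'}(u)$ and therefore $X*\Gamma'\leq\Ri_{\Gamma'}(1)$. Granting this, both $\St_\Gamma(1)$ and $\St_{\Gamma'}(1)=\Gamma'\cap\St_\Gamma(1)$ contain $X*\Gamma'$, so $[\St_\Gamma(1):\St_{\Gamma'}(1)]$ equals the ratio $16/|\St_{\Gamma'}(1)/\Ri_{\Gamma'}(1)|$. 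It then suffices to compute that index, and by the isomorphism theorem it is the order of the image of $\St_\Gamma(1)$ in $\Gamma/\Gamma'\cong(C_2)^3$. Reading off exponent sums, $\alpha,\beta,\delta,\gamma$ have abelianized images $(0,1,1)$, $(0,1,1)$, $(1,0,1)$, $(1,0,1)$, spanning a $2$-dimensional subspace of order $4$; hence $|\St_{\Gamma'}(1)/\Ri_{\Gamma'}(1)|=16/4=4$.

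The only genuinely new ingredient beyond bookkeeping is the identification $\Ri_{\Gamma'}(1)=X*\Gamma'$, and I expect this — together with the care needed to confirm that every stabilizer in play contains $X*\Gamma'$, so that the index ratio is legitimate — to be the main, if mild, obstacle; the rest is linear algebra over $\mathbb{F}_2$ using data already assembled in the excerpt.
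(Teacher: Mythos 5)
Your proposal is correct and follows essentially the same route as the paper: identify $\St_\Gamma(1)/\Ri_\Gamma(1)$ with the span $U$ of $\widetilde\alpha,\widetilde\beta,\widetilde\delta,\widetilde\gamma$ in $(X*\Gamma)/(X*\Gamma')$, note $\Ri_{\Gamma'}(1)=X*\Gamma'=\Ri_\Gamma(1)$, and finish with $\mathbb{F}_2$-linear algebra on the exponent sums of $\alpha,\beta,\delta,\gamma$. The only (immaterial) difference is that you compute the image of $\St_\Gamma(1)$ in $\Gamma/\Gamma'$ and divide $16$ by its order, whereas the paper directly exhibits the kernel of that same map as the two-dimensional subspace spanned by $\widetilde\alpha+\widetilde\beta$ and $\widetilde\delta+\widetilde\gamma$.
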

\begin{proof}
We have seen in the proof of Lemma \ref{rist} that $\St_{\Gamma}(1)/\Ri_\Gamma(1)=\St_{\Gamma}(1)/X*\Gamma'=U$ is a four dimensional vector space over $\mathbb{F}_2$ (the images of $\alpha$, $\beta$, $\delta$, and $\gamma$ form a basis). Hence $U$ has $16$ elements. 

Now, by Lemmas \ref{branchingsubgroup} and \ref{rist}, we see that $\Ri_{\Gamma}(n)=\Ri_{\Gamma'}(n)=X^n*\Gamma'$. This gives $\St_{\Gamma'}(1)/\Ri_{\Gamma'}(1)=(\St_\Gamma(1) \cap \Gamma')/\Ri_\Gamma(1)={U\cap (\Gamma'/\Ri_\Gamma(1)})$. Moreover, since a word in $a$, $b$, and $c$ is in $\Gamma'$ if and only if each generator appears in it an even number of times, a word in $\alpha$, $\beta$, $\delta$, and $\gamma$ is in $\Gamma'$ if and only if the number of appearances of $\alpha$ and $\beta$ have the same parity and the number of appearances of $\delta$ and $\gamma$ have the same parity. It follows that $U\cap (\Gamma'/\Ri_\Gamma(1))$ is the two dimensional subspace spanned by $\widetilde{\alpha}+\widetilde{\beta}$ and $\widetilde{\delta} + \widetilde{\gamma}$.
\end{proof}

As $\Gamma$ is self-replicating,  if $g \in \St_\Gamma(u)$, then $g@u$ must also be an element of $\Gamma$. Corollary \ref{index} and the following lemma serve to elucidate the action of $\Gamma$ on the top levels of $\mathcal{T}$.

\begin{lemma}
\label{stabilizer12}
\begin{enumerate}
\item $\Gamma/\St_\Gamma(1)\cong S_3$, the symmetric group on three letters. 
\item $\St_\Gamma(1)/\St_\Gamma(2)$ considered as a subgroup of $(S_3)^3$ is the kernel of the homomorphism $\phi: (S_3)^3 \rightarrow C_2$ where $\phi$ sums the signs of the permutation in each coordinate. This quotient has order $2^2\cdot 3^3$.
\end{enumerate}
\end{lemma}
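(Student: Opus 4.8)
The plan is to treat the two parts separately: part (1) is essentially immediate, and part (2) reduces to a sign computation combined with the rigid-stabilizer count already established in Corollary~\ref{ristquotient}.

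For part (1), I would use the homomorphism $\Gamma \to S_3$ sending $g$ to its root labeling $g(\emptyset)$. This is exactly the map induced by the action on the first level; its kernel is the set of elements with trivial root labeling, i.e.\ $\St_\Gamma(1)$. The image is generated by the root labelings $\sigma_{23}, \sigma_{13}, \sigma_{12}$ of $a$, $b$, $c$, and these three transpositions generate all of $S_3$, so the map is onto and $\Gamma/\St_\Gamma(1)\cong S_3$.

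For part (2), recall from Section~\ref{sec:branchgroups} that $\St_\Gamma(1)/\St_\Gamma(2)$ embeds canonically as a subgroup of $(S_3)^3$, generated there by the images of the Reidemeister--Schreier generators $\alpha,\beta,\delta,\gamma$, where the image of an element records the triple of root labelings of its three first-level states. The first step is to verify $\St_\Gamma(1)/\St_\Gamma(2)\subseteq\ker\phi$ by reading off these states: $\alpha=(a,cb,a)_1$ has sign vector $(\mathrm{odd},\mathrm{even},\mathrm{odd})$ since $cb$ acts at the root by a product of two transpositions, and likewise $\beta=(a,a,bc)_1$, $\delta=(ca,b,b)_1$, $\gamma=(b,b,ac)_1$ each have exactly one even coordinate. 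In every case the sum of the three signs is $0$, so each generator lies in $\ker\phi$. Since $\phi$ is clearly onto $C_2$, we have $|\ker\phi|=6^3/2=2^2\cdot 3^3=108$, which is the claimed order.

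For the reverse inclusion I would invoke Corollary~\ref{ristquotient} with $n=1$, which places $\Ri_\Gamma(1)\St_\Gamma(2)/\St_\Gamma(2)=(A_3)^3$ inside $\St_\Gamma(1)/\St_\Gamma(2)$. The sign map $(S_3)^3\to(\mathbb{Z}/2\mathbb{Z})^3$ has kernel $(A_3)^3$, and composing it with summation recovers $\phi$; hence $\ker\phi$ is the preimage of the plane $\{s_1+s_2+s_3=0\}$ and has index $4$ over $(A_3)^3$. The sign vectors of $\alpha,\beta,\delta$ are $(1,0,1)$, $(1,1,0)$, $(0,1,1)$, which span that plane, so $\St_\Gamma(1)/\St_\Gamma(2)$ surjects onto $\ker\phi/(A_3)^3$; together with the containment $(A_3)^3\subseteq\St_\Gamma(1)/\St_\Gamma(2)$ this forces $\St_\Gamma(1)/\St_\Gamma(2)=\ker\phi$ of order $108$. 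The sign bookkeeping is routine; the only genuine content is the lower bound on the order, and the main obstacle---realizing the entire subgroup $(A_3)^3$ inside the quotient---is precisely what Corollary~\ref{ristquotient} supplies, so the argument collapses to this coset count once that input is in hand.
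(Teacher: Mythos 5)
Your proof is correct and follows essentially the same route as the paper: verify the sign vectors of $\alpha,\beta,\delta,\gamma$ to get containment in $\ker\phi$, then establish the reverse inclusion by exhibiting $(A_3)^3$ inside $\St_\Gamma(1)/\St_\Gamma(2)$ together with elements filling out the index-$4$ quotient. The only difference is that you source the containment $(A_3)^3\leq \St_\Gamma(1)/\St_\Gamma(2)$ from Corollary~\ref{ristquotient} (legitimate, since it precedes this lemma), whereas the paper re-derives it directly from $\overline{\delta}^2=(\sigma_{132},1,1)$ and spherical transitivity.
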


\begin{proof}
a) We have already observed that $\Gamma/\St_\Gamma(1)\cong S_3$.\\

\noindent b) The images of $\alpha$, $\beta$, $\delta$, and $\gamma$ in $\St_\Gamma(1)/\St_\Gamma(2)$ are 
$$\overline{\alpha}=(\sigma_{23},\sigma_{123},\sigma_{23}) \qquad \overline{\beta}=(\sigma_{23},\sigma_{23},\sigma_{132})$$
$$\overline{\delta}=(\sigma_{123},\sigma_{13},\sigma_{13}) \qquad \overline{\gamma}=(\sigma_{13},\sigma_{13},\sigma_{132}).$$
Thus $\overline{\alpha},\overline{\beta},\overline{\delta},$ and $\overline{\gamma}$ are in $\ker(\phi)$.
Further $\overline{\delta}^2=(\sigma_{132},1,1)$ and, by spherical transitivity, this implies that $(A_3)^3\leq \St_\Gamma(1)/\St_\Gamma(2)$. Also, $\overline{\alpha\beta}=(1, \sigma_{13}, \sigma_{13})$ and $\overline{\delta\gamma}=(\sigma_{23},1, \sigma_{23})$. Collectively, these elements generate $\ker(\phi)$.
\end{proof}

Now we apply our knowledge of the permutations appearing on the top levels of the tree to gain an understanding of action on subtrees rooted at the lower levels.

\begin{lemma}
\label{stabilizerquotients}
For $n\geq 1$, we have isomorphisms ${\St_{\Gamma'}(n)/\St_{\Gamma'}(n+1)}\cong { \St_\Gamma(n)/\St_\Gamma(n+1)\cong X^{n-1}*\St_\Gamma(1)/X^{n-1}*\St_\Gamma(2)}$. In particular, all three groups have order $2^{2\cdot 3^{n-1}}\cdot 3^{3^n}$.
\end{lemma}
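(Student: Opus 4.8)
The plan is to prove both isomorphisms by reducing each of the first two groups to the third, which is the transparent one. Since $X^{n-1}*K=\prod_{|u|=n-1}u*K$ is an internal direct product over the $3^{n-1}$ vertices of level $n-1$ of mutually commuting copies $u*K\cong K$ supported on disjoint subtrees, one has $X^{n-1}*\St_\Gamma(1)/X^{n-1}*\St_\Gamma(2)\cong(\St_\Gamma(1)/\St_\Gamma(2))^{3^{n-1}}$. By Lemma~\ref{stabilizer12} the factor $\St_\Gamma(1)/\St_\Gamma(2)$ has order $2^2\cdot 3^3$, so this group has order $(2^2\cdot3^3)^{3^{n-1}}=2^{2\cdot3^{n-1}}\cdot3^{3^n}$, and once the two isomorphisms are in place the order count holds for all three. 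It therefore suffices to produce isomorphisms $\St_\Gamma(n)/\St_\Gamma(n+1)\cong(\St_\Gamma(1)/\St_\Gamma(2))^{3^{n-1}}$ and $\St_{\Gamma'}(n)/\St_{\Gamma'}(n+1)\cong\St_\Gamma(n)/\St_\Gamma(n+1)$.

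The engine for both is the key claim that $\St_{\Gamma'}(n)\St_\Gamma(n+1)=\St_\Gamma(n)$ for every $n\geq1$, which I would prove by induction on $n$. Granting it, the inclusion $\St_{\Gamma'}(n)\hookrightarrow\St_\Gamma(n)$ induces a map to $\St_\Gamma(n)/\St_\Gamma(n+1)$ that is injective, since $\St_{\Gamma'}(n)\cap\St_\Gamma(n+1)=\Gamma'\cap\St_\Gamma(n+1)=\St_{\Gamma'}(n+1)$, and surjective by the claim, yielding the second isomorphism. For the base case $n=1$ I would invoke Lemma~\ref{stabilizer12}, Corollary~\ref{ristquotient}, and Corollary~\ref{index}: the subgroup $\Ri_\Gamma(1)=X*\Gamma'$ lies in $\St_{\Gamma'}(1)$ and has image $(A_3)^3$ in $\St_\Gamma(1)/\St_\Gamma(2)$, while $\alpha\beta$ and $\delta\gamma$ lie in $\Gamma'$ (each generator occurs an even number of times) and have images $(1,\sigma_{13},\sigma_{13})$ and $(\sigma_{23},1,\sigma_{23})$; as recorded in the proof of Lemma~\ref{stabilizer12}, these together generate $\ker(\phi)=\St_\Gamma(1)/\St_\Gamma(2)$, so $\St_{\Gamma'}(1)$ already surjects onto the quotient. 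For the inductive step I would use self-similarity: given $g\in\St_\Gamma(n+1)$, write $g=(g_1,g_2,g_3)$ with $g_i=g@i\in\St_\Gamma(n)$, and by the induction hypothesis factor $g_i=g_i's_i$ with $g_i'\in\St_{\Gamma'}(n)\subseteq\Gamma'$ and $s_i\in\St_\Gamma(n+1)$. Since $X*\Gamma'=\Ri_\Gamma(1)\leq\St_{\Gamma'}(1)$ is precisely the set of elements whose three first-level states lie in $\Gamma'$ (Lemmas~\ref{branchingsubgroup} and~\ref{rist}), the triple $(g_1',g_2',g_3')$ is realized by some $h\in\St_{\Gamma'}(1)$ with $h@i=g_i'$; as each $g_i'\in\St_\Gamma(n)$ we get $h\in\St_\Gamma(n+1)\cap\Gamma'=\St_{\Gamma'}(n+1)$. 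Finally $(gh^{-1})@i=g_i(g_i')^{-1}=g_i's_i(g_i')^{-1}\in\St_\Gamma(n+1)$ by normality of $\St_\Gamma(n+1)$, so $gh^{-1}\in\St_\Gamma(n+2)$ and $g\in\St_{\Gamma'}(n+1)\St_\Gamma(n+2)$, completing the induction.

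The first isomorphism then falls out of the same decomposition. The map $\St_\Gamma(n+1)\to(\St_\Gamma(n)/\St_\Gamma(n+1))^3$ sending $g$ to the triple of states $g@1,g@2,g@3$ modulo $\St_\Gamma(n+1)$ has kernel exactly $\St_\Gamma(n+2)$, and its image contains the images of all $h\in\St_\Gamma(n+1)$ whose states lie in $\St_{\Gamma'}(n)$, namely $(\St_{\Gamma'}(n)\St_\Gamma(n+1)/\St_\Gamma(n+1))^3$, which equals $(\St_\Gamma(n)/\St_\Gamma(n+1))^3$ by the key claim. Hence $\St_\Gamma(n+1)/\St_\Gamma(n+2)\cong(\St_\Gamma(n)/\St_\Gamma(n+1))^3$, and iterating gives $\St_\Gamma(n)/\St_\Gamma(n+1)\cong(\St_\Gamma(1)/\St_\Gamma(2))^{3^{n-1}}$, matching the third group and closing the argument. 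I expect the main obstacle to be the key claim $\St_{\Gamma'}(n)\St_\Gamma(n+1)=\St_\Gamma(n)$: the base case genuinely needs the explicit permutation data of Lemma~\ref{stabilizer12} to see that $\Gamma'$ captures the two ``sign'' directions of $\ker(\phi)$ beyond the rotation part $(A_3)^3$, and the inductive step must be routed carefully, because $X^{n-1}*\St_\Gamma(1)$ is \emph{not} contained in $\Gamma$ (only $X^n*\Gamma'$ is), so prescribed states can be realized only through $\Gamma'\times\Gamma'\times\Gamma'\subseteq X*\Gamma'$ together with the normality of the level stabilizers, rather than by treating the blocks as naively independent.
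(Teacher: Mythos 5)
Your proof is correct, but it takes a genuinely different route from the paper. The paper argues by counting: it embeds $\St_{\Gamma'}(n)/\St_{\Gamma'}(n+1)$ into $\St_\Gamma(n)/\St_\Gamma(n+1)$ and that into $X^{n-1}*\St_\Gamma(1)/X^{n-1}*\St_\Gamma(2)$, a group of known order $2^{2\cdot 3^{n-1}}\cdot 3^{3^n}$, and then forces all the inclusions to be equalities by exhibiting inside $\St_{\Gamma'}(n)/\St_{\Gamma'}(n+1)$ a $3$-subgroup of order $3^{3^n}$ (the image of $X^n*\Gamma'$, via Corollary~\ref{ristquotient}) and a $2$-subgroup of order $2^{2\cdot 3^{n-1}}$ (coming from $X^{n-1}*\Gamma'\cap\St_{\Gamma'}(n)$), whose product already has full order. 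You instead isolate the structural identity $\St_{\Gamma'}(n)\St_\Gamma(n+1)=\St_\Gamma(n)$, prove it by induction --- the base case from the explicit generators of $\ker(\phi)$ in Lemma~\ref{stabilizer12} realized inside $\Gamma'$, the inductive step by realizing a prescribed triple of $\Gamma'$-valued states through $X*\Gamma'=\Ri_\Gamma(1)\leq\Gamma$ and using normality of the level stabilizers --- and then both isomorphisms fall out formally, the second from $\St_{\Gamma'}(n)\cap\St_\Gamma(n+1)=\St_{\Gamma'}(n+1)$ and the first from the surjectivity of the states map $\St_\Gamma(n+1)\to(\St_\Gamma(n)/\St_\Gamma(n+1))^3$. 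Your version costs an extra induction but buys a cleaner logical skeleton: it never needs the two exhibited subgroups to be checked for trivial intersection or full joint order, it makes explicit the surjectivity statement that the paper later extracts from this lemma in the main theorem (in the form $\St_\Gamma(n+1)=\St_\Gamma(n+2)(X^n*\St_{\Gamma'}(1))$), and you correctly flag the one genuinely delicate point, namely that prescribed states can only be realized through $X*\Gamma'$ rather than by treating the level-one blocks as independent. Both arguments are sound; yours is the more self-contained of the two.
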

\begin{proof}
Since \[{\St_{\Gamma'}(n)/\St_{\Gamma'}(n+1)=\St_{\Gamma'}(n)/(\St_{\Gamma'}(n)\cap \St_\Gamma(n+1))},\]
 the group ${\St_{\Gamma'}(n)/\St_{\Gamma'}(n+1)}$ can be considered as a subgroup of \\ ${\St_\Gamma(n)/\St_\Gamma(n+1)}$. By self-similarity, $\St_\Gamma(n)/\St_\Gamma(n+1)$ can be considered as a subgroup of $(X^{n-1}*\St_\Gamma(1))/(X^{n-1}*\St_\Gamma(2))$, a group of order $2^{2\cdot 3^{n-1}}\cdot 3^{3^n}$. Therefore it suffices to prove that 
\[|\St_{\Gamma'}(n)/\St_{\Gamma'}(n+1)|\geq 2^{2\cdot 3^{n-1}}\cdot 3^{3^n}.\]

Observe that $\Gamma'/\St_{\Gamma'}(1)\cong A_3$, generated by the image of $[a,b]=(ab,a,b)\sigma_{123}$ and recall that $\Ri_\Gamma(n)=X^n*\Gamma'\leq \Gamma'$. It follows that $\St_{\Gamma'}(n)/\St_{\Gamma'}(n+1)$ contains $(X^n*\Gamma')\St_{\Gamma'}(n+1)/\St_{\Gamma'}(n+1)$. Note that
\[
(X^n*\Gamma') \St_{\Gamma'}(n+1)/\St_{\Gamma'}(n+1) \cong X^n*\Gamma'/(\St_{\Gamma'}(n+1)\cap X^n*\Gamma')
\]
\[
=X^n*\Gamma'/X^n*\St_{\Gamma'}(1) \cong (\Gamma'/\St_{\Gamma'}(1))^{3^n} \cong (A_3)^{3^n}.
\]

\noindent Therefore, $\St_{\Gamma'}(n)/\St_{\Gamma'}(n+1)$ has a subgroup of order $3^{3^n}$.

Now,  $\St_{\Gamma'}(n)/\St_{\Gamma'}(n+1)$ also contains a subgroup isomorphic to $(X^{n-1}*\Gamma'\cap \St_{\Gamma'}(n))/(X^{n-1}*\Gamma'\cap\St_{\Gamma'}(n+1))$. Moreover, by (\ref{observation}) this subgroup is isomorphic to $(\St_{\Gamma'}(1)/\St_{\Gamma'}(2))^{3^{n-1}}$ which has order $2^{2\cdot 3^{n-1}}$ by Corollary \ref{index}.
\end{proof}

Now, we have all the tools needed to prove the main theorem.

\begin{theorem}
The rigid kernel $\ker(\widetilde{\Gamma}\rightarrow \overline{\Gamma})$ is the Klein four-group.
\end{theorem}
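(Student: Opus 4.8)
The plan is to compute the inverse limit $\varprojlim_n \St_\Gamma(n)/\Ri_\Gamma(n)$ by pinning down the stabilized images of the transition maps $\rho_{n,n+k}$ and the behaviour of those maps on the stable part. Throughout I write $V_n=\St_\Gamma(n)/\Ri_\Gamma(n)$ and, using $\St_\Gamma(n)\leq X^n*\Gamma$ together with $\Ri_\Gamma(n)=X^n*\Gamma'$ from Lemma~\ref{rist}, I view $V_n$ inside $(X^n*\Gamma)/(X^n*\Gamma')\cong(\Gamma/\Gamma')^{3^n}$ via the level-$n$ states. The first tool I would record is the abelianization formula $\overline{g}=\sum_{i}\overline{g@i}$ in $\Gamma/\Gamma'\cong(\mathbb{Z}/2\mathbb{Z})^3$: the assignment $g\mapsto\sum_i\overline{g@i}$ is a homomorphism (summing over all first-level states is additive modulo $\Gamma'$) and agrees with abelianization on $a,b,c$, so the two maps coincide; iterating gives $\overline g=\sum_{|u|=n}\overline{g@u}$. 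Consequently the map $\pi_n\colon V_n\to\Gamma/\Gamma'$, $g\Ri_\Gamma(n)\mapsto\overline g$, is well defined, and its image is $P_n$, the image of $\St_\Gamma(n)$ in $\Gamma/\Gamma'$.

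The central computation is the image of each transition map. I claim that, inside $(\Gamma/\Gamma')^{3^n}$,
\[
\rho_{n,n+k}\bigl(V_{n+k}\bigr)=V_n\cap (P_k)^{3^n}.
\]
The inclusion $\subseteq$ is immediate since every level-$n$ state of an element of $\St_\Gamma(n+k)$ lies in $\St_\Gamma(k)$. For $\supseteq$ I would use that one may modify an element $g\in\St_\Gamma(n)$ by any element of $\Ri_\Gamma(n)=X^n*\Gamma'$ without changing its class in $V_n$; this lets me adjust each level-$n$ state $g@u$ independently within its $\Gamma'$-coset, and the hypothesis $\overline{g@u}\in P_k$ provides a coset representative lying in $\St_\Gamma(k)$, pushing $g$ into $\St_\Gamma(n+k)$. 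Next I would compute $P_k$. A direct check on the generators $\alpha,\beta,\delta,\gamma$ shows $P_1=\{(x,y,z):x+y+z=0\}$, the two-dimensional ``even'' plane, and the recursion above reduces $P_{k+1}$ to $\{y_1+y_2+y_3:(y_1,y_2,y_3)\in U\cap(P_1)^3\}$, which a short calculation identifies again as $P_1$. Hence $P_k=P_1$ for all $k$, the images stabilise already at $k=1$, and the stable image is $I_n:=V_n\cap(P_1)^{3^n}=\rho_{n,n+1}(V_{n+1})$.

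It follows that the rigid kernel equals $\varprojlim_n I_n$ with the induced transition maps, so it remains to determine the $I_n$ and these maps. Passing to first-level states identifies $I_{n+1}$ with $\{(x_1,x_2,x_3)\in (I_n)^3:(\pi_n x_1,\pi_n x_2,\pi_n x_3)\in U\}$, and since $\pi_n(I_n)=P_1$ with fibres of size $|I_n|/4$, while the admissible triples of $\pi$-values form $U\cap(P_1)^3=I_1$ of size $4$, I obtain $|I_{n+1}|=4\,(|I_n|/4)^3$. As $|I_1|=|U\cap(P_1)^3|=4$, induction yields $|I_n|=4$ for every $n$.

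Finally I would show each transition map $I_{n+1}\to I_n$ is injective, which together with $|I_n|=4$ forces it to be an isomorphism and gives $\varprojlim_n I_n\cong I_1\cong(\mathbb{Z}/2\mathbb{Z})^2$. The kernel of $\rho_{n,n+1}$ on $V_{n+1}$ is $(X^n*\St_{\Gamma'}(1))/\Ri_\Gamma(n+1)\cong(\St_{\Gamma'}(1)/\Ri_{\Gamma'}(1))^{3^n}$ by (\ref{observation}) and Corollary~\ref{index}, so an element of this kernel lies in $I_{n+1}$ precisely when each of its $\St_{\Gamma'}(1)/\Ri_{\Gamma'}(1)$-components has all three first-level states in $P_1$. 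Here is the crux, and the step I expect to be the main obstacle: using the basis $\widetilde\alpha+\widetilde\beta=(0,0,0\mid 1,1,1\mid 1,1,1)$ and $\widetilde\delta+\widetilde\gamma=(1,1,1\mid 0,0,0\mid 1,1,1)$ of $\St_{\Gamma'}(1)/\Ri_{\Gamma'}(1)$ from Corollary~\ref{index}, every nonzero element has a block equal to $(1,1,1)$, which is odd and hence not in $P_1$; so only the identity survives, the intersection $\ker\rho_{n,n+1}\cap I_{n+1}$ is trivial, and injectivity follows. This parity obstruction, which makes the ``even-block'' subspace $I_n$ transverse to the kernel direction $\St_{\Gamma'}(1)/\Ri_{\Gamma'}(1)$, is exactly what keeps the limit from collapsing and is the heart of the argument.
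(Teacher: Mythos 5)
Your proposal is correct, and it takes a genuinely different route from the paper. The paper reduces everything to counting: it computes $|K_{n,n+1}|$ and $|Q_{n,n+1}|$ from the exact sequence $1\to K_{n,n+1}\to\Gamma_{n+1}\to\Gamma_n\to Q_{n,n+1}\to 1$, deduces $|\St_\Gamma(n)/\Ri_\Gamma(n)|=2^{2(3^{n-1}+1)}$ and hence that each image $H_{n,n+1}$ has order $4$, and then proves stabilization indirectly by showing the cokernels agree, $Q_{n,n+1}=Q_{n,n+2}$, via the factorization $\St_\Gamma(n+1)=\St_\Gamma(n+2)(X^n*\St_{\Gamma'}(1))$ from Lemma~\ref{stabilizerquotients}. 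You instead identify the images explicitly: the homomorphism identity $\overline g=\sum_{|u|=n}\overline{g@u}$ (which is not in the paper and is worth stating as a lemma) lets you characterize $\rho_{n,n+k}(V_{n+k})$ as $V_n\cap(P_k)^{3^n}$, and the computation $P_k=P_1$ makes the stabilization at $k=1$ transparent rather than a consequence of an order count; the recursion $|I_{n+1}|=4(|I_n|/4)^3$ then replaces the paper's use of Corollary~\ref{ristquotient} and Lemma~\ref{stabilizerquotients} entirely (you only need Lemma~\ref{rist}, Corollary~\ref{index}, the generators of $\St_\Gamma(1)$, and identity~(\ref{observation})). What your approach buys is a concrete description of the kernel elements --- coherent sequences all of whose states have ``even'' abelianization --- and an explanation of \emph{why} the limit does not collapse, via the transversality of $U\cap(P_1)^3$ to $\St_{\Gamma'}(1)/\Ri_{\Gamma'}(1)$ inside $U$; the paper's argument is shorter given its preliminary lemmas but leaves the stable images implicit. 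One small remark: your final injectivity step is logically redundant, since $\rho_{n,n+1}(I_{n+1})=\rho_{n,n+2}(V_{n+2})=I_n$ shows the restricted maps are surjective, and surjections between groups of order $4$ are already isomorphisms; still, the parity obstruction you exhibit is correct (every nonzero element of $\langle\widetilde\alpha+\widetilde\beta,\widetilde\delta+\widetilde\gamma\rangle$ has a block $(1,1,1)\notin P_1$) and is the honest structural reason the answer is $4$ rather than something smaller.
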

\begin{proof}
By Corollary \ref{elab}, the rigid kernel is an elementary abelian $2$-group, so we only need to show that it has order 4.

For notational simplicity, for all $n\geq 1$, define $\Gamma_{n}:=\St_\Gamma(n)/\Ri_\Gamma(n)$. Further, under the natural map from $\Gamma_{n+k}$ to $\Gamma_{n}$, let $H_{n,n+k}$ be the image of $\Gamma_{n+k}$ in $\Gamma_n$, let $K_{n,n+k}$ be the kernel of this map, and let $Q_{n, n+k}$ be the cokernel of this map (note that $H_{n,n+k}\unlhd \Gamma_n$).  

Recall that the rigid kernel is $\displaystyle\varprojlim_{n\geq 1} \Gamma_n$. We will show that for all $n$,  $H_{n,n+1}=H_{n, n+2}$ and that both have order $4$. This implies that for each $n$, the maps $H_{n+1, n+2} \rightarrow H_{n,n+1}$ are isomorphisms and hence $\displaystyle\varprojlim_{n\geq 1} \Gamma_n=\displaystyle\varprojlim_{n\geq 1} H_{n,n+1}$ also has order $4$, completing the proof. 

The first step in doing this is to determine $H_{n,n+1}$. We have the exact sequence
\begin{align}
 \label{exact}
1 \rightarrow K_{n, n+1} \rightarrow \Gamma_{n+1} \rightarrow \Gamma_n \rightarrow Q_{n,n+1} \rightarrow 1.
\end{align}

Now 
\[
K_{n,n+1} = (\St_\Gamma(n+1)\cap\Ri_\Gamma(n))/\Ri_\Gamma(n+1)
\]
\[  
  = (\St_\Gamma(n+1)\cap X^n*\Gamma')/ X^{n+1}*\Gamma'= X^n*\St_{\Gamma'}(1)/X^{n+1}*\Gamma'
\]
\[
  \cong (\St_{\Gamma'}(1)/\Ri_{\Gamma'}(1))^{3^n},
\]

\noindent hence $|K_{n,n+1}|=2^{2\cdot 3^n}$ from Corollary \ref{index}. 

Also, $Q_{n,n+1}=\St_\Gamma(n)/\Ri_\Gamma(n)\St_\Gamma(n+1)$ has $2^{2\cdot 3^{n-1}}$ elements by Lemma \ref{stabilizerquotients} and Corollary \ref{ristquotient}.

Since sequence~(\ref{exact}) is exact, $\frac{|\Gamma_{n+1}|}{|\Gamma_n|}=\frac{|K_{n,n+1}|}{|Q_{n,n+1}|}=2^{4\cdot3^{n-1}}$. Further, by Corollary \ref{index}, $|\Gamma_1|=16$. Collectively, we obtain
$$|\Gamma_n|=2^4 \prod_{i=2}^n 2^{4\cdot 3^{i-2}}=2^{2\cdot (3^{n-1}+1)}$$
and the size of $H_{n,n+1}$ is $\frac{|\Gamma_{n+1}|}{|K_{n,n+1}|}=\frac{2^{2(3^n+1)}}{2^{2\cdot 3^n}}=4$.

Now it remains to show that $Q_{n,n+2}=Q_{n,n+1}$ as this would imply $H_{n,n+2}=~H_{n,n+1}$ and moreover that $H_{n+1,n+2}$ maps isomorphically to $H_{n,n+1}$ for all $n$.

Now 
$$Q_{n,n+i}=\St_\Gamma(n)/(X^n*\Gamma')\St_\Gamma(n+i).$$

Thus showing $Q_{n,n+1}=Q_{n,n+2}$ is the same as showing 
\[(X^n*\Gamma')\St_\Gamma(n+~1)= (X^n*\Gamma')\St_\Gamma(n+2).\]

By Lemma \ref{stabilizerquotients}, 
\[
{\St_\Gamma(n+1)/\St_\Gamma(n+2)}\cong {X^n*\St_\Gamma(1)/X^n*\St_\Gamma(2)}
\]
\[
\cong {X^n*\St_{\Gamma'}(1)/X^n*\St_{\Gamma'}(2)}.
\]
Hence, ${\St_\Gamma(n+1)}={\St_\Gamma(n+2)(X^n*\St_{\Gamma'}(1))}$ and we obtain 
\[
{(X^n*\Gamma')\St_\Gamma(n+1)}={(X^n*\Gamma')(X^n*\St_{\Gamma'}(1))\St_\Gamma(n+2)}
\]
\[
={(X^n*\Gamma')\St_\Gamma(n+2)}.
\]
\end{proof}

\bibliographystyle{alpha}

\begin{thebibliography}{2}

\bibitem[BG\u{S}03]{BGS03} \scshape{Bartholdi, Laurent; Grigorchuk, Rostislav I.; \u{S}uni\'{k}, Zoran.} Branch groups. \textit{Handbook of Algebra},  \textbf{3} (2003), North-Holland, Amsterdam, 989–1112.

\bibitem [BS09]{BS09} \scshape{Bartholdi, Laurent; Siegenthaler, Olivier.} The twisted twin of the Grigorchuk group, \textit{Internat. J. Algebra Comput.} \textbf{20} (2010), no. 4, 465–488. 

\bibitem[BSZ12]{BSZ12} \scshape{Bartholdi, Laurent; Siegenthaler, Olivier; Zalesskii, Pavel.} The congruence subgroup problem for branch groups, \textit{Israel J. Math.} \textbf{187} (2012), 419–450.

\bibitem[BLS64]{BLS64} \scshape{Bass, H.; Lazard, M.; Serre, J.P.}  Sous-groupes d'indice fini dans $SL(n, \mathbb{Z})$, \textit{Bull. Amer. Math. Soc.} \textbf{70} (1964), 385-392.

\bibitem[FGU17]{FGU17} \scshape{Fern\'{a}ndez-Alcober, Gustavo A.; Garrido, Alejandra; Uria-Albizuri, Jone.} On the congruence subgroup property for GGS-groups, \textit{Proc. Amer. Math. Soc.} \textbf{8} (2017), no. 8, 3311-3322. 

\bibitem[Gar16]{Gar14} \scshape{Garrido, Alejandra.} On the congruence subgroup problem for branch groups, \textit{Israel J. Math.}, \textbf{216} (2016), no. 1, 1-13.

\bibitem[Gri80]{Gri80} \scshape{Grigorchuk, Rostislav I.} On Burnside's problem on periodic groups, (Russian)  \textit{Funktsional. Anal. i Prilozhen}, \textbf{14} (1980), no. 1, 53–54, English translation: Functional Anal. Appl. 14(1980), 41–43. 

\bibitem[Gri00]{Gri00} \scshape{Grigorchuk, Rostislav I.} Just infinite branch groups, \textit{New horizons in Pro-$p$ groups}, Progr. Math., 184, Birkhäuser Boston, Boston, MA, 2000,  121–179. 

\bibitem[G\u{S}06]{GS06}\scshape{Grigorchuk, Rostislav I.;\u{S}uni\'{k}, Zoran.} \textit{Asymptotic aspects of Schreier graphs and Hanoi Towers groups}.(English, French summary) C. R. Math. Acad. Sci. Paris 342 (2006), no. 8, 545–550. 

\bibitem[Per07]{Per07} \scshape{Pervova, Ekaterina.} \textit{Profinite completions of some groups acting on trees}, J. Algebra, \textbf{310} (2007), no. 2, 858–879.

\bibitem[RZ10]{RZ10} \scshape{Ribes, Luis; Zalesskii, Pavel}, Profinite groups, Second Edition, Springer-Verlag, Berlin (2010).

\bibitem[Ski17]{Ski17} \scshape{Skipper, Rachel}, \textit{The congruence subgroup problem for a family of branch groups}, arXiv:1709.08218.


\end{thebibliography}

\end{document}